\newtheorem{thm}[subsection]{Theorem}
\newtheorem{cor}[subsection]{Corollary}
\newtheorem{lemma}[subsection]{Lemma}
\newtheorem{remark}[subsection]{Remark}
\theoremstyle{definition}
\numberwithin{equation}{section}
\def\phi{{\varphi}}
\def\ra{\rightarrow}
\def\cA{{\mathcal A}}
\def\cI{{\mathcal I}}
\def\cJ{{\mathcal J}}
\def\cV{{\mathcal V}}
\def\cW{{\mathcal W}}
\def\gg{{\mathfrak g}}
\def\gl{{\mathfrak l}}
\def\gs{{\mathfrak s}}
\newfont{\german}{eufm10}
\begin{document}
\pagestyle{plain}

\title
{The $\mathbb{Z}_2$-orbifold of the $\cW_3$-algebra}

\author{Masoumah Al-Ali}
\address{Department of Mathematics, University of Denver}
\email{masoumah.alali@du.edu}

\author{Andrew R. Linshaw} 
\address{Department of Mathematics, University of Denver}
\email{andrew.linshaw@du.edu}
\thanks{A. R. L. is supported by Simons Foundation Grant \#318755}

%\date{\today}
\begin{abstract} The Zamolodchikov $\cW_3$-algebra $\cW^c_3$ with central charge $c$ has full automorphism group $\mathbb{Z}_2$. It was conjectured in the physics literature over 20 years ago that the orbifold $(\cW^c_3)^{\mathbb{Z}_2}$ is of type $\cW(2,6,8,10,12)$ for generic values of $c$. We prove this conjecture for all $c \neq  \frac{559 \pm 7 \sqrt{76657}}{95}$, and we show that for these two values, the orbifold is of type $\cW(2,6,8,10,12,14)$. This paper is part of a larger program of studying orbifolds and cosets of vertex algebras that depend continuously on a parameter. Minimal strong generating sets for orbifolds and cosets are often easy to find for generic values of the parameter, but determining which values are generic is a difficult problem. In the example of $(\cW^c_3)^{\mathbb{Z}_2}$, we solve this problem using tools from algebraic geometry.
\end{abstract}

%\keywords{}
\maketitle
%\tableofcontents
\section{Introduction}
Given a vertex algebra $\cV$ and a group $G$ of automorphisms of $\cV$, the invariant subalgebra $\cV^G$ is called an {\it orbifold} of $\cV$. Many interesting vertex algebras can be constructed either as orbifolds or as extensions of orbifolds. A spectacular example is the Moonshine vertex algebra $V^{\natural}$, which is an extension of the $\mathbb{Z}_2$-orbifold of the lattice vertex algebra associated to the Leech lattice \cite{B,FLM}. There is a substantial literature on the structure and representation theory of orbifolds under finite group actions; see for example \cite{DVVV,DHVW,DM,DLMI,DLMII,DRX}. It is widely believed that nice properties of $\cV$ such as $C_2$-cofiniteness and rationality will be inherited by $\cV^G$ when $G$ is finite. In the case where $G$ is cyclic, the $C_2$-cofiniteness of $\cV^G$ was proven by Miyamoto in \cite{M}, and the rationality was recently established by Carnahan and Miyamoto in \cite{CM}.

Many vertex algebras depend continuously on a complex parameter $k$. Examples include the universal affine vertex algebra $V^k(\gg)$ associated to a simple, finite-dimensional Lie algebra $\gg$, and the $\cW$-algebra $\cW^k(\gg,f)$ associated to $\gg$ together with a nilpotent element $f\in \gg$. Typically, if $\cV^k$ is such a vertex algebra depending on $k$, it is simple for generic values of $k$ but has a nontrivial maximal proper ideal $\cI_k$ for special values. Often, one is interested in the structure and representation theory of the simple quotient $\cV_k = \cV^k / \cI_k$ at these points. For example, the $C_2$-cofiniteness and rationality of simple affine vertex algebras at positive integer level was proven by Frenkel and Zhu in \cite{FZ}, and the $C_2$-cofiniteness and rationality of several families of $\cW$-algebras is due to Arakawa \cite{A}.

Suppose that $\cV^k$ is such a vertex algebra and $G \subset \text{Aut}(\cV^k)$ is a reductive group of automorphisms such that $\cV^k$ decomposes as a sum of finite-dimensional $G$-modules. Then $G$ preserves $\cI_k$, and hence acts on $\cV_k$. For the purpose of studying the discrete family of orbifolds $(\cV_k)^G$ when $\cI_k$ is nontrivial in a uniform manner, it is useful to first consider the orbifold $(\cV^k)^G$ for generic values of $k$. If a strong generating set for $(\cV^k)^G$ can be found, it will descend to a strong generating set for $(\cV_k)^G$ since the projection $\pi_k: \cV^k \ra \cV_k$ restricts to a projection $(\cV^k)^G  \ra (\cV_k)^G$. Finding a strong generating set for a vertex algebra is very useful since strong generators give rise to generators for both Zhu's associative algebra and Zhu's commutative algebra \cite{Zh}.

In the case where $\cV^k = V^k(\gg)$, it was shown by the second author in \cite{L} that for any reductive group $G$, $V^k(\gg)^G$ is strongly finitely generated for generic values of $k$. This method can be adapted to study the generic behavior of other families of orbifolds, including orbifolds of affine vertex superalgebras \cite{CL} and minimal $\cW$-algebras \cite{ACKL}. Unfortunately, the approach of \cite{L} gives little insight into which values of $k$ are generic. In order to use the strong generating set for $(\cV^k)^G$ to obtain a strong generating set for $(\cV_k)^G$, it is necessary to know whether or not $k$ is a generic value.

In this paper, we give a complete solution to this problem for the $\mathbb{Z}_2$-orbifold of the Zamolodchikov $\cW_3$-algebra $\cW^c_3$ with central charge $c$. It was conjectured over 20 years ago in the physics literature \cite{BS,B-H} that $(\cW^c_3)^{\mathbb{Z}_2}$ should be of type $\cW(2,6,8,10,12)$ for generic values of $c$. In other words, a minimal strong generating set consists of one field in each weight $2,6,8,10,12$. Our main result is the following.

\begin{thm} \label{intro:mainthm}
\begin{enumerate}
\item For all $c \neq \frac{559 \pm 7 \sqrt{76657}}{95}$, $(\cW^c_3)^{\mathbb{Z}_2}$ is of type $\cW(2,6,8,10,12)$.
\item For $c =  \frac{559 \pm 7 \sqrt{76657}}{95}$, $(\cW^c_{3})^{\mathbb{Z}_2}$ is of type $\cW(2,6,8,10,12,14)$.
\end{enumerate}
\end{thm}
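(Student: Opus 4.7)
The plan is to exploit the free PBW structure of $\cW^c_3$: it is strongly generated by the Virasoro field $L$ of weight $2$ and a primary field $W$ of weight $3$, with a basis of normally ordered monomials in their derivatives, and the $\mathbb{Z}_2$-automorphism sends $W \mapsto -W$, $L \mapsto L$. Thus $(\cW^c_3)^{\mathbb{Z}_2}$ is spanned by those monomials containing an even number of $W$-factors, and a character/parity count shows that beyond $L$ and its normally ordered polynomials the new generators can appear only in the quadratic-in-$W$ sector, with the lowest in weight $6$. For each even $2n \geq 6$ we construct a candidate generator $U_{2n}$ as the unique primary field whose leading quadratic-in-$W$ part is the weight-$2n$ monomial $:W \partial^{2n-6} W:$, after subtracting normally ordered corrections in $L$ and in the previous $U_{2m}$.

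The crux is a decoupling computation in weight $14$. One writes the most general normally ordered polynomial of weight $14$ in $\{L, U_6, U_8, U_{10}, U_{12}\}$ with undetermined coefficients in $\mathbb{Q}(c)$, expands it in the PBW basis via repeated Wick-formula reductions, and matches the result against $U_{14}$. The resulting linear system is solvable unless a certain polynomial $\alpha(c) \in \mathbb{Q}[c]$ vanishes, and explicit calculation identifies $\alpha(c)$ as a nonzero scalar multiple of the quadratic with roots $c = \frac{559 \pm 7\sqrt{76657}}{95}$. For every other value of $c$, dividing out by $\alpha(c)$ yields the desired expression for $U_{14}$ as a normally ordered polynomial in lower generators, and the standard propagation argument of \cite{L} (iteratively applying $L_{(1)}$ to the weight-$14$ relation) extends this to a decoupling of $U_{2n}$ for every $n \geq 7$; this establishes part (1).

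For part (2), at the exceptional values $c_0 = \frac{559 \pm 7\sqrt{76657}}{95}$ the weight-$14$ linear system becomes inconsistent in $U_{14}$, so $U_{14}$ must be retained as a genuine new generator. We then repeat the decoupling computation one weight higher with $\{L, U_6, U_8, U_{10}, U_{12}, U_{14}\}$: the analogous coefficient polynomial $\beta(c)$ controlling whether $U_{16}$ decouples must be checked to be nonzero at both values $c_0$, after which propagation again yields strong generation by $\{L, U_6, U_8, U_{10}, U_{12}, U_{14}\}$ in all higher weights. The main obstacle is the sheer size of the weight-$14$ computation: normally ordered products such as $:U_{2m} U_{2n}:$ with $2m + 2n = 14$, together with $L$-insertions, produce a large collection of PBW monomials and a substantial linear system over $\mathbb{Q}(c)$ whose solvability locus has to be computed exactly. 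This is precisely where the algebraic-geometry input promised in the abstract enters: one must identify the bad locus as the zero set of the single explicit polynomial $\alpha(c) \in \mathbb{Q}[c]$, and verify that the weight-$16$ polynomial $\beta(c)$ does not vanish at either root of $\alpha$.
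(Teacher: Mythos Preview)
There is a genuine gap in your propagation step. In the paper's mode convention $a(z)=\sum_n a(n)z^{-n-1}$, the operator $L\circ_1$ coincides with the Virasoro zero mode $L_0$; applied to any homogeneous element it simply multiplies by the conformal weight. Hence ``iteratively applying $L_{(1)}$ to the weight-$14$ relation'' produces nothing but scalar multiples of the relation you already have, and cannot decouple $U_{2n}$ for $n\geq 8$. (Other Virasoro modes are no better: $L_{-1}=\partial$ turns a relation into a total derivative, and positive modes lower weight.) So your argument stops at weight $14$.

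What the paper actually does is apply the operators $U_{0,0}\circ_1$ and $U_{2,0}\circ_1$, which raise weight by $4$ and $6$ respectively, to existing decoupling relations. The leading coefficients are explicit rational functions
\[
U_{0,0}\circ_1 U_{n,0}=F(n,c)\,U_{n+4,0}+\cdots,\qquad
U_{2,0}\circ_1 U_{n,0}=G(n,c)\,U_{n+6,0}+\cdots,
\]
and the main technical work of the paper is computing $F(n,c)$ and $G(n,c)$ in closed form (each is a ratio of polynomials, quadratic in $c$ and polynomial in $n$). This is where the algebraic-geometry input actually enters: one must show that the variety $\{F(n-4,c)=G(n-6,c)=0\}\subset\mathbb{C}^2$ contains no point with $n\geq 16$ an even integer, which is done by analyzing the asymptotics and monotonicity of the two root branches $r_i(n)$, $s_i(n)$ as real functions of $n$. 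Your proposal locates the algebraic geometry entirely in the single weight-$14$ obstruction polynomial $\alpha(c)$, but that computation is just the base case; the substantive obstruction is the two-variable common-zero analysis for all higher $n$.

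There is a second, smaller gap. Before the inductive propagation can begin, one needs starting relations that decouple $U_{10,0}$, $U_{12,0}$, $U_{14,0}$ for \emph{every} value of $c$ (including the two exceptional values). The paper does this by exhibiting, at each of weights $16$, $18$, $20$, two independent relations whose obstruction quadratics in $c$ have no common root; your proposal does not address this.
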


An immediate consequence is that for all $c \neq \frac{559 \pm 7 \sqrt{76657}}{95}$, the simple orbifold $(\cW_{3,c})^{\mathbb{Z}_2}$ has the same strong generating set, where $\cW_{3,c}$ denotes the simple quotient of $\cW^c_3$. If the maximal ideal $\cI_c \subset \cW^c_3$ has components of weight $w \leq 12$, there may be additional decoupling relations and this strong generating set for $(\cW_{3,c})^{\mathbb{Z}_2}$ need not be minimal. If explicit generators for $\cI_c$ are known, it is straightforward to reduce the above strong generating set to a minimal set if possible, but we do not carry this out in the present paper.

The proof of Theorem \ref{intro:mainthm} proceeds as follows. We first construct a natural infinite strong generating set $$\{L, U_{2n,0}|\ n\geq 0\}$$ for $(\cW^c_3)^{\mathbb{Z}_2}$, where $U_{2n,0} = \ :(\partial^{2n} W)W:$, which has weight $2n+6$. This generating set comes from classical invariant theory, and there are infinitely many nontrivial normally ordered relations among these generators. The relation of minimal weight $14$ is unique up to scalar multiples, and has the form
$$\frac{181248 + 5590 c - 475 c^2}{60480 (22 + 5 c)} U_{8,0} = P(L, U_{0,0}, U_{2,0}, U_{4,0}, U_{6,0}),$$ where $P$ is a normally ordered polynomial in $L, U_{0,0}, U_{2,0}, U_{4,0}, U_{6,0}$ and their derivatives. The pole at $c = -\frac{22}{5}$ is inessential and can be removed; it is a consequence of the choice of normalization of $W$. Therefore $U_{8,0}$ can be eliminated if and only if $c \neq \frac{559 \pm 7 \sqrt{76657}}{95}$. In a similar way, we construct decoupling relations for all $c$ expressing $U_{10,0}$, $U_{12,0}$ and $U_{14,0}$ as normally ordered polynomials in $L, U_{0,0}, U_{2,0}, U_{4,0}, U_{6,0}, U_{8,0}$, and their derivatives. In order to construct decoupling relations for $U_{n,0}$ for all even integers $n\geq 16$, we apply the operators $U_{0,0} \circ_1$ and $U_{2,0}\circ_1$ to the above relations. This yields two families of relations
\begin{equation} \label{intro:decoup1} F(n,c) U_{n+4,0} = A_n(L, U_{0,0}, U_{2,0},\dots, U_{n+2,0}),\end{equation}
\begin{equation} \label{intro:decoup2} G(n,c) U_{n+6,0} = B_n(L, U_{0,0}, U_{2,0},\dots, U_{n+4,0}),\end{equation} where $A_n$ and $B_n$ are normally ordered polynomials as above. The key observation is that $F(n,c)$ and $G(n,c)$ are rational functions of $c$ and $n$ which have no poles for $c \neq -\frac{22}{5}$ and $n \geq 10$. So if $(c,n)$ does not lie on the affine variety $V \subset \mathbb{C}^2$ determined by $F(n-4,c) = 0 $ and $G(n-6,c) = 0$, we can use either \eqref{intro:decoup1} or \eqref{intro:decoup2} to eliminate $U_{n,0}$ for all $n\geq 16$. The main technical result in this paper is finding the explicit form of $F(n,c)$ and $G(n,c)$; it is then straightforward to prove that $V$ has no such points $(c,n)$ where $n \geq 16$ is an even positive integer.

Although this result may at first seem to be an isolated case study, it in fact provides a general algorithmic approach to determining the nongeneric set for orbifolds of the form $(\cV^k)^G$. Typically, there is a natural infinite strong generating set for $(\cV^k)^G$ coming from classical invariant theory. There are also infinitely many nontrivial normally ordered relations among these generators. These relations allow certain generators to be eliminated, and for generic values of $k$, all but finitely many can be eliminated. If we eliminate as many generators as possible, the remaining ones will form a minimal strong generating set $S$, and a value of $k$ will be called {\it generic} if $(\cV^k)^G$ is strongly generated by $S$. We expect that families of relations can be constructed such that the coefficients of the generators to be eliminated are rational functions in finitely many variables $$F_i(k, n_1,\dots, n_r),\qquad i =1,\dots, s.$$ Here $n_1,\dots, n_r$ must be positive integers, and are related to the weights of the generators to be eliminated. Corresponding to such a system of relations is the variety $V \subset \mathbb{C}^{r+1}$ determined by $F_i(k, n_1,\dots, n_r) = 0$ for $i=1,\dots, s$. A value of $k$ will be generic if there is no point $(k, n_1,\dots, n_r)\in V$ such that the remaining coordinates $n_1,\dots, n_r$ are positive integers. Points with such strong integrality constraints are expected to be rare, and in principle can be found. It would be nice to prove in some generality that the nongeneric set for orbifolds of the form $(\cV^k)^G$ is always finite, but this is out of reach at the moment.

\section{Vertex algebras}
We will assume that the reader is familiar with the basics of vertex algebra theory, which has been discussed from several different points of view in the literature (see for example \cite{B,FLM,K,FBZ}). We will follow the formalism developed in \cite{LZ} and partly in \cite{LiI}, and we will use the notation of the previous papers of the second author \cite{ACL,CL,L}. Given an element $a$ in a vertex algebra $\cA$, the corresponding field is denoted by $$a(z) = \sum_{n\in \mathbb{Z}} a(n) z^{-n-1}.$$ Given $a,b \in \cA$, the {\it operator product expansion} (OPE) formula is given by
$$a(z)b(w)\sim\sum_{n\geq 0}(a \circ_n b)(w)\ (z-w)^{-n-1}.$$ Here $(a \circ_n b)(w) = \text{Res}_z [a(z), b(w)](z-w)^n$ where $[a(z), b(w)] = a(z) b(w)\ -\ (-1)^{|a||b|} b(w)a(z)$, and $\sim$ means equal modulo terms which are regular at $z=w$. The normally ordered (or Wick) product $:a(z)b(z):$ is defined to be $$a(z)_-b(z)\ +\ (-1)^{|a||b|} b(z)a(z)_+,$$ where $$a(z)_-=\sum_{n<0}a(n)z^{-n-1},\qquad a(z)_+=\sum_{n\geq
0}a(n)z^{-n-1}.$$  For fields $a_1(z),\dots, a_k(z)$, the $k$-fold
iterated Wick product is defined inductively to be
\begin{equation} \label{iteratedwick} :a_1(z)a_2(z)\cdots a_k(z):\ =\ :a_1(z) \big(  :a_2(z)\cdots a_k(z): \big).\end{equation}
A subset $S=\{a_i|\ i\in I\}$ of $\cA$ {\it strongly generates} $\cA$ if $\cA$ is spanned by $$\{ :\partial^{k_1} a_{i_1}(z)\cdots \partial^{k_m} a_{i_m}(z):| \ i_1,\dots,i_m \in I,\ k_1,\dots,k_m \geq 0\}.$$ We say that $S$ {\it freely generates} $\cA$ if there are no nontrivial normally ordered polynomial relations among the generators and their derivatives. We say that $\cA$ is of type $$\cW(d_1,\dots, d_r)$$ if it has a minimal strong generating set consisting of one field in each weight $d_1,\dots, d_r$.

\section{The $\cW_3$-algebra}
The $\cW_3$-algebra $\cW^c_3$ with central charge $c$ was introduced by Zamolodchikov \cite{Za}. It is a nonlinear extension of the Virasoro algebra of type $\cW(2,3)$, and is strongly generated by a Virasoro field $L$ and a weight $3$ primary field $W$ satisfying
\begin{equation} \label{w3first} L(z) L(w) \sim \frac{c}{2} (z-w)^{-4} + 2L(w)(z-w)^{-2} + \partial L(w)(z-w)^{-1},\end{equation}
\begin{equation} \label{w3second} L(z) W(w) \sim 3 W(w)(z-w)^{-2} + \partial W(w)(z-w)^{-1},\end{equation}
\begin{equation} \label{w3third} \begin{split} W(z)W(w) \sim \frac{c}{3}(z-w)^{-6} + 2L(w)(z-w)^{-4} + \partial L(w)(z-w)^{-3} \\ + \bigg( \frac{32}{22 + 5 c} :LL: + \frac{3 (-2 + c)}{2 (22 + 5 c)} \partial^2 L \bigg) (z-w)^{-2} \\
+ \bigg( \frac{32}{22 + 5 c} :(\partial L)L: + \frac{-2 + c}{3 (22 + 5 c)} \partial^3 L \bigg) (z-w)^{-1}. \end{split}\end{equation} 
In fact, $\cW^c_3$ is isomorphic to the principal $\cW$-algebra $\cW^k(\gs\gl_3, f_{\text{prin}})$ where $c = 2 - \frac{24(k+2)^2}{k+3}$. Even though \eqref{w3third} has a pole at $c = -\frac{22}{5}$, we can still define $\cW^{-22/5}_3$ by rescaling $W$ by a factor of $\sqrt{22 + 5 c}$, and then taking the limit as $c \ra -\frac{22}{5}$. The rescaled generator, also denoted by $W$, now satisfies
\begin{equation} \label{ope:c=-22/5} W(z)W(w) \sim \bigg(32 :LL: -\frac{48}{5} \partial^2 L\bigg) (z-w)^{-2} + \bigg(32 :(\partial L)L:  - \frac{32}{15} \partial^3 L \bigg)(z-w)^{-1}.\end{equation}
For all $c\in \mathbb{C}$, $\cW^c_3$ is freely generated by $L,W$; in particular, it has a PBW basis
\begin{equation} \label{standardmonomial} :(\partial^{a_1} L) \cdots (\partial^{a_r} L) (\partial^{b_1} W) \cdots (\partial^{b_{s}} W):, \quad r,s\geq 0,\quad a_1\geq \cdots \geq a_r \geq 0,\quad b_1 \geq \cdots \geq b_s \geq 0.\end{equation} For simplicity, we shall use the notation $\cW$ for $\cW^c_3$ throughout this paper.

\subsection{Filtrations}
In \cite{ACL}, the notion of a {\it weak increasing filtration} on a vertex algebra $\cA$ was introduced. It is a $\mathbb{Z}_{\geq 0}$-filtration
\begin{equation}\label{weak} \cA_{(0)}\subset\cA_{(1)}\subset\cA_{(2)}\subset \cdots,\qquad \cA_{(-1)} = \{0\}, \qquad \cA = \bigcup_{d\geq 0}
\cA_{(d)}, \end{equation} such that for $a\in \cA_{(r)}$, $b\in\cA_{(s)}$, we have
\begin{equation} a\circ_n b \in \cA_{r+s},\qquad n\in \mathbb{Z}.\end{equation}
This condition guarantees that $\text{gr}(\cA) = \bigoplus_{d\geq 0}\cA_{(d)}/\cA_{(d-1)}$ is a vertex algebra. Let $$\phi_d: \cA_{(d)} \ra \cA_{(d)} / \cA_{(d-1)} \subset \text{gr}(\cA)$$ be the projection. As in the case of good increasing filtrations \cite{LiII}, we have the following reconstruction property, and the proof is the same as the proof of Lemma 3.6 of \cite{LL}.

\begin{lemma}\label{reconlem}Let $\cA$ be a vertex algebra with a weak increasing filtration, and let $\{a_i|\ i\in I\}$ be a set of strong generators for $\text{gr}(\cA)$, where $a_i$ is homogeneous of degree $d_i$. If $\tilde{a}_i\in\cA_{(d_i)}$ are elements satisfying $\phi_{d_i}(\tilde{a}_i) = a_i$, then $\cA$ is strongly generated as a vertex algebra by $\{\tilde{a}_i|\ i\in I\}$.\end{lemma}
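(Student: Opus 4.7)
My approach is induction on the filtration degree $d$: I will show that every element of $\cA_{(d)}$ is expressible as a normally ordered polynomial in the lifts $\tilde a_i$ and their derivatives. Since $\cA = \bigcup_{d\geq 0} \cA_{(d)}$, this is exactly the statement that $\{\tilde a_i\}$ strongly generates $\cA$.

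For the base case $d = 0$, any $b\in\cA_{(0)}$ is identified with $\phi_0(b)\in\cA_{(0)}\subset\text{gr}(\cA)$, which by hypothesis equals a normally ordered polynomial $P$ in the degree-zero $a_i$ (and their derivatives). The same polynomial formed from the lifts $\tilde a_i$ recovers $b$ itself, since $\cA_{(-1)} = 0$ leaves no room for a lower-filtration correction.

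For the inductive step, assume every element of $\cA_{(d-1)}$ admits such a presentation, and let $b\in\cA_{(d)}$. Because $\{a_i\}$ strongly generates $\text{gr}(\cA)$, the image $\phi_d(b)\in\cA_{(d)}/\cA_{(d-1)}$ is a normally ordered polynomial $P$ in the $a_{i_j}$ and their derivatives of total filtration degree at most $d$. Form the element $\tilde P\in\cA$ by repeating the same expression with each $a_{i_j}$ replaced by $\tilde a_{i_j}$. The weak filtration axiom $a\circ_n b\in\cA_{(r+s)}$ for $n\in\mathbb Z$ applies both at $n=-1$ (the Wick product) and, via $\partial a = a\circ_{-2}\mathbf{1}$ with $\mathbf{1}\in\cA_{(0)}$, to derivatives; iterating yields $\tilde P\in\cA_{(d)}$ and $\phi_d(\tilde P) = \phi_d(b)$. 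Hence $b-\tilde P\in\cA_{(d-1)}$, which by the inductive hypothesis is a normally ordered polynomial in the $\tilde a_i$ and their derivatives; adding $\tilde P$ gives the desired expression for $b$.

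The only substantive point, and essentially the only place the weak filtration hypothesis is used, is the compatibility of the filtration with iterated normally ordered products and derivatives, guaranteeing that lifting a polynomial from $\text{gr}(\cA)$ back to $\cA$ does not increase filtration degree. Once this is checked, the argument is the same formal induction that appears in the proof of Lemma 3.6 of \cite{LL}, and I do not expect any further obstacle.
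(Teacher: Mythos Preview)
Your proposal is correct and follows exactly the standard filtration-induction argument that the paper invokes by citing Lemma~3.6 of \cite{LL}; the paper does not spell out the proof but simply notes it is the same as in that reference. One minor point: in the inductive step you should take $P$ to be homogeneous of degree exactly $d$ in $\text{gr}(\cA)$ (not merely of degree at most $d$), since $\phi_d(b)$ lives in the single graded piece $\cA_{(d)}/\cA_{(d-1)}$; this is implicit in your argument and causes no trouble.
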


The filtration $\cW_{(0)} \subset \cW_{(1)} \subset \cdots$ on $\cW$ is defined as follows: $\cW_{(-1)} = \{0\}$, and $\cW_{(r)}$ is spanned by iterated Wick products of the generators $L,W$ and their derivatives, such that at most $r$ copies of $W$ and its derivatives appear. It is clear from \eqref{w3first}-\eqref{w3third} that this is a weak increasing filtration, and $\cW_{(0)}$ is the Virasoro algebra with generator $L$. Note that the associated graded algebra $$\cV = \text{gr}(\cW) = \bigoplus_{d \geq 0} \cW_{(d)} / \cW_{(d-1)}$$ is freely generated by $L,W$. The OPE relations \eqref{w3first}-\eqref{w3second} still hold in $\cV$, but \eqref{w3third} is replaced with $W(z) W(w) \sim 0$. Finally, $\cV$ has a good increasing filtration $$\cV_{(0)} \subset \cV_{(1)} \subset \cdots,$$ where  $\cV_{(-1)} = \{0\}$, and $\cV_{(r)}$ is spanned by iterated Wick products of the generators $L,W$ and their derivatives, of length at most $r$. Then $\text{gr}(\cV)$ is an abelian vertex algebra freely generated by $L,W$. In particular, $\text{gr}(\cV)$ is isomorphic to the polynomial algebra $$\mathbb{C}[L,\partial L, \partial^2 L,\dots, W, \partial W,\partial^2 W,\dots].$$

\section{The $\mathbb{Z}_2$-orbifold of $\cW$} \label{sect:structureorbifold}
The full automorphism group of $\cW$ is $\mathbb{Z}_2$, where the nontrivial involution $\theta$ acts on the generators as follows:
\begin{equation}\label{action} \theta(L) = L,\qquad \theta(W) = -W.\end{equation}

It is immediate that  $\cW^{\mathbb{Z}_2}$ is spanned by all normally ordered monomials of the form \eqref{standardmonomial}, where $s$ is even. We say that $\omega \in \cW^{\mathbb{Z}_2}$ is in {\it normal form} if it has been expressed as a linear combination of such monomials. Since $\cW$ is freely generated by $L,W$, these monomials form a basis for $\cW^{\mathbb{Z}_2}$, and the normal form is unique.

The filtration on $\cW$ restricts to a filtration on $\cW^{\mathbb{Z}_2}$, $$\cW^{\mathbb{Z}_2}_{(0)} \subset\cW^{\mathbb{Z}_2}_{(1)} \subset \cdots ,\qquad \cW^{\mathbb{Z}_2}_{(r)} = \cW^{\mathbb{Z}_2} \cap \cW_{(r)} .$$ 
The $\mathbb{Z}_2$-action descends to $\cV = \text{gr}(\cW)$, and $$\text{gr}(\cW^{\mathbb{Z}_2}) \cong \cV^{\mathbb{Z}_2}.$$ Similarly, $\mathbb{Z}_2$ acts on $\text{gr}(\cV)  \cong \mathbb{C}[L,\partial L, \partial^2 L,\dots, W, \partial W,\partial^2 W,\dots]$, and $$\text{gr}(\cV^{\mathbb{Z}_2}) \cong \text{gr}(\cV)^{\mathbb{Z}_2} \cong \mathbb{C}[L,\partial L, \partial^2 L,\dots, W, \partial W,\partial^2 W,\dots]^{\mathbb{Z}_2}.$$ Since the action is given by
$\theta(\partial^k L) = \partial^k L$ and $\theta(\partial^k W) = - \partial^k W$, $\text{gr}(\cV)^{\mathbb{Z}_2}$ is generated by
$\{L, u_{i,j}|\ i, j\geq 0\}$, where $u_{i,j} = (\partial^i W) (\partial^j W)$. Note that $u_{i,j}= u_{j,i}$, so we only need $\{L, u_{i,j}|\ i \geq j \geq 0\}$. The ideal of relations among these generators is clearly generated by \begin{equation} \label{ideal} u_{i,j} u_{k,l} - u_{i,l} u_{k,j}=0,\qquad 0 \leq i<k,\qquad 0 \leq j<l.\end{equation}
As a differential algebra with derivation $\partial$, there is some redundancy in this generating set for $\text{gr}(\cV)^{\mathbb{Z}_2} $ since $$ \partial u_{i,j} = u_{i+1,j} +  u_{i,j+1}.$$ Letting $A_n$ be the span of $\{u_{i,j}|\ i+j = n\}$, note that $A_n = \partial (A_{n-1})$ if $n$ is odd, and $A_n = \partial(A_{n-1}) \oplus \langle u_{n,0} \rangle$ if $n$ is even. Therefore 
$\{u_{i,j}|\ i,j \geq 0\}$ and $\{\partial^m u_{2n,0}|\ m,n\geq 0\}$ span the same vector space, and $\{L, u_{2n,0}|\ n\geq 0\}$ is a minimal generating set for $\text{gr}(\cV)^{\mathbb{Z}_2}$ as a differential algebra. Define \begin{equation} \label{defuij} U_{i,j}=\ :(\partial^i W)( \partial^j W):\ \in \cW^{\mathbb{Z}_2}_{(2)},\end{equation} which has filtration degree $2$ and weight $i+j+6$. 
\begin{lemma} $\cW^{\mathbb{Z}_2}$ is strongly generated by
\begin{equation} \label{gen} \{L, U_{2n,0}|\ n\geq 0\}.\end{equation}
\end{lemma}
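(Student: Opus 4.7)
My plan is to apply the reconstruction Lemma \ref{reconlem} twice: first to pass from $\text{gr}(\cV)^{\mathbb{Z}_2}$ up to $\cV^{\mathbb{Z}_2}$ using the good increasing filtration on $\cV$, and then from $\text{gr}(\cW^{\mathbb{Z}_2}) \cong \cV^{\mathbb{Z}_2}$ up to $\cW^{\mathbb{Z}_2}$ using the weak increasing filtration on $\cW$. Both filtrations are preserved by the $\mathbb{Z}_2$-action, and the excerpt already records the identifications $\text{gr}(\cW^{\mathbb{Z}_2}) \cong \cV^{\mathbb{Z}_2}$ and $\text{gr}(\cV^{\mathbb{Z}_2}) \cong \text{gr}(\cV)^{\mathbb{Z}_2}$.

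The starting point is the observation made in the paragraph preceding the lemma: as a commutative differential algebra, $\text{gr}(\cV)^{\mathbb{Z}_2} \cong \mathbb{C}[L,\partial L,\dots,W,\partial W,\dots]^{\mathbb{Z}_2}$ is generated by $\{L, u_{2n,0}\mid n\geq 0\}$, since the span of $\{u_{i,j}\mid i,j\geq 0\}$ coincides with the span of $\{\partial^m u_{2n,0}\mid m,n\geq 0\}$. Because $\text{gr}(\cV)$ is an abelian vertex algebra, strong generation as a vertex algebra coincides with generation as a commutative differential algebra, so $\{L, u_{2n,0}\mid n\geq 0\}$ strongly generates $\text{gr}(\cV)^{\mathbb{Z}_2}$ as a vertex algebra.

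Next I would apply Lemma \ref{reconlem} to the good increasing filtration on $\cV$: the element $U_{2n,0} = \ :(\partial^{2n}W)W: \in \cV^{\mathbb{Z}_2}_{(2)}$ has leading symbol $u_{2n,0}$ in $\text{gr}(\cV)^{\mathbb{Z}_2}$, and this lift is available for every $n \geq 0$, so the lemma yields strong generation of $\cV^{\mathbb{Z}_2}$ by $\{L, U_{2n,0}\mid n\geq 0\}$. Applying the lemma a second time, now to the weak increasing filtration on $\cW$, the lifts $L$ and $U_{2n,0}$ defined by \eqref{defuij} inside $\cW^{\mathbb{Z}_2}$ project under $\phi_{2}$ to the strong generators of $\text{gr}(\cW^{\mathbb{Z}_2}) \cong \cV^{\mathbb{Z}_2}$ just constructed, and hence strongly generate $\cW^{\mathbb{Z}_2}$.

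I do not anticipate any serious obstacle: once the classical invariant-theoretic observation about $\text{gr}(\cV)^{\mathbb{Z}_2}$ is in place, the lemma follows formally from two applications of reconstruction, keeping careful track of which filtration each is applied to. The substantive work, carried out in the remainder of the paper, is to extract from this infinite strong generating set the finite one appearing in Theorem \ref{intro:mainthm}, and to pin down exactly which values of $c$ are generic.
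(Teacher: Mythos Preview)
Your proposal is correct and follows essentially the same route as the paper: start from the classical invariant-theoretic generating set $\{L, u_{2n,0}\}$ for $\text{gr}(\cV)^{\mathbb{Z}_2}$, use reconstruction for the good increasing filtration on $\cV$ to obtain strong generators of $\cV^{\mathbb{Z}_2}$, and then apply Lemma~\ref{reconlem} for the weak increasing filtration on $\cW$ to lift to $\cW^{\mathbb{Z}_2}$. The only cosmetic difference is that the paper cites Lemma~3.6 of \cite{LL} (the good-filtration version) for the first step rather than invoking Lemma~\ref{reconlem} twice; since good increasing filtrations are in particular weak increasing filtrations, your formulation is equally valid.
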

\begin{proof} Since $\{L, u_{2n,0}|\ n\geq 0\}$ generates $\text{gr}(\cV)^{\mathbb{Z}_2} \cong \text{gr}(\cV^{\mathbb{Z}_2})$ as a differential algebra, Lemma 3.6 of \cite{LL} shows that the corresponding set strongly generates $\cV^{\mathbb{Z}_2}$ as a vertex algebra. Applying Lemma \ref{reconlem} then yields the result. In particular, note that in $\cW^{\mathbb{Z}_2}$, both $U_{i,j} - U_{j,i}$ and $\partial U_{i,j} - U_{i+1,j} - U_{i,j+1}$ can be expressed as normally ordered polynomials in $L$ and its derivatives, so $\{U_{i,j}|\ i,j \geq 0\}$ and $\{\partial^m U_{2n,0}|\ m,n\geq 0\}$ span the same vector space, modulo the Virasoro algebra generated by $L$.
\end{proof} 
\begin{remark} $U_{2n,0}$ is not primary with respect to $L$. For the first few values of $n$, it is easy to correct $U_{2n,0}$ by adding a normally ordered polynomial in $L, U_{0,0}, U_{2,0} \dots, U_{2n-2,0}$ and their derivatives to make it primary, but this is not necessary for our purposes.\end{remark}
\begin{remark} In terms of the generating set \eqref{gen}, $\cW^{\mathbb{Z}_2}_{(2r)}$ is spanned by elements with at most $r$ of the fields $U_{2n,0}$, and 
$\cW^{\mathbb{Z}_2}_{(2r)} = \cW^{\mathbb{Z}_2}_{(2r+1)}$. \end{remark}

\section{Decoupling relations} \label{sect:decouplingrel} Observe next that $\cW^{\mathbb{Z}_2}$ is not {\it freely} generated by \eqref{gen}. To see this, observe that 
\begin{equation} \label{rel:wt14cl} u_{0,0} u_{1,1} - u_{1,0} u_{1,0}=0 \end{equation} is the unique relation of the form \eqref{ideal} in $\text{gr}(\cV)^{\mathbb{Z}_2}$, of minimal weight $14$. The corresponding element $:U_{0,0} U_{1,1}: - :U_{1,0} U_{1,0}:$ of $\cW^{\mathbb{Z}_2}$ does not vanish due to \eqref{w3third}. However, it lies in the degree $2$ filtered piece $\cW^{\mathbb{Z}_2}_{(2)}$ and has the form 
\begin{equation} \label{rel:wt14} :U_{0,0} U_{1,1}: - :U_{1,0} U_{1,0}: \ = \frac{181248 + 5590 c - 475 c^2}{60480 (22 + 5 c)} U_{8,0} + P(L, U_{0,0}, U_{2,0}, U_{4,0}, U_{6,0}),\end{equation} where $P$ is a normally ordered polynomial in $L, U_{0,0}, U_{2,0}, U_{4,0}, U_{6,0}$, and their derivatives. For the reader's convenience, this relation is written down explicitly in the Appendix. Note that $$U_{1,0} = \frac{1}{2} \partial U_{0,0}  - \frac{8}{3 (22 + 5 c)} : (\partial^3 L)L: - \frac{8}{22 + 5 c} :(\partial^2 L)\partial L: - \frac{-2 + c}{48 (22 + 5 c)} \partial^5 L,$$
$$U_{1,1} =  - U_{2,0} + \frac{1}{2} \partial^2 U_{0,0}  - \frac{8}{3 (22 + 5 c)} :(\partial^4 L)L: - \frac{32}{3 (22 + 5 c)} :(\partial^3 L)\partial L: $$ $$ -  \frac{8}{22 + 5 c} :(\partial^2 L)\partial^2 L: + \frac{2 - c}{48 (22 + 5 c)} \partial^6 L:.$$ Therefore the left side of \eqref{rel:wt14} is a normally ordered polynomial in $L, U_{0,0}, L_{2,0}$, so \eqref{rel:wt14} can be written in the form
\begin{equation} \label{rel:wt14a} \frac{181248 + 5590 c - 475 c^2}{60480 (22 + 5 c)} U_{8,0} = P_8(L, U_{0,0}, U_{2,0}, U_{4,0}, U_{6,0}).\end{equation}
We call this a {\it decoupling relation} since it allows $U_{8,0}$ to be expressed as a normally ordered polynomial in $L, U_{0,0}, U_{2,0}, U_{4,0}, U_{6,0}$ and their derivatives whenever $c \neq -\frac{22}{5}, \frac{559 \pm 7 \sqrt{76657}}{95}$. In fact, the pole at $c = -\frac{22}{5}$ is inessential and is a consequence of the choice of normalization of $W$. For convenience, we shall assume that $c \neq -\frac{22}{5}$ for the remainder of Sections \ref{sect:decouplingrel}-\ref{sect:commonzero}, and we deal with the case $c = -\frac{22}{5}$ separately in Section \ref{sect:22/5}.

Since there are no relations in $\text{gr}(\cV)^{\mathbb{Z}_2}$ of weight less than $14$, there are no decoupling relations for $U_{0,0}, U_{2,0}, U_{4,0}, U_{6,0}$. We shall see that the coefficient of $U_{8,0}$ in \eqref{rel:wt14a} is {\it canonical} in the sense that it does not depend on any choices of normal ordering in $P_8$. The uniqueness of \eqref{rel:wt14cl} up to scalar multiples implies the uniqueness of \eqref{rel:wt14a}, so for $c = \frac{559 \pm 7 \sqrt{76657}}{95}$, there is no decoupling relation for $U_{8,0}$.

\subsection{Weight $16$ relations} By correcting the relation $u_{0,0} u_{2,2} - u_{2,0} u_{2,0}=0$ in $\text{gr}(\cV)^{\mathbb{Z}_2}$ as above, we get the following relation in $\cW^{\mathbb{Z}_2}$ in weight $16$:
\begin{equation} \label{rel:wt1622}  :U_{0,0} U_{2,2}: - :U_{2,0} U_{2,0}:\  = -\frac{434176 - 20326 c + 35 c^2}{151200 (22 + 5 c)} U_{10,0} + Q(L,U_{0,0}, U_{2,0}, U_{4,0}, U_{6,0}, U_{8,0}),\end{equation} where $Q$ is a normally ordered polynomial in $L,U_{0,0}, U_{2,0}, U_{4,0}, U_{6,0}, U_{8,0}$ and their derivatives. As above, $U_{2,2}$ can be written as a normally ordered polynomial in $L, U_{0,0}, U_{2,0}, U_{4,0}$ and their derivatives, so \eqref{rel:wt1622} can be written in the form
\begin{equation} \label{rel:wt1622a}-\frac{434176 - 20326 c + 35 c^2}{151200 (22 + 5 c)} U_{10,0} = Q_{10} (L,U_{0,0}, U_{2,0}, U_{4,0}, U_{6,0}, U_{8,0}).\end{equation} This shows that $U_{10,0}$ can be eliminated whenever $c \neq \frac{10163 \pm \sqrt{88090409}}{35}$. 

Similarly, by correcting the relation $u_{0,0} u_{3,1} - u_{3,0} u_{1,0} = 0$, we get another relation
\begin{equation} \label{rel:wt1631} :U_{0,0} U_{3,1}: - :U_{3,0} U_{1,0}:\  = -\frac{13 (-1920 - 42 c + 5 c^2)}{9450 (22 + 5 c)} U_{10,0} + Q'(L,U_{0,0}, U_{2,0}, U_{4,0}, U_{6,0}, U_{8,0}),\end{equation} which can be rewritten as
\begin{equation} \label{rel:wt1631a} -\frac{13 (-1920 - 42 c + 5 c^2)}{9450 (22 + 5 c)} U_{10,0} = Q'_{10}(L,U_{0,0}, U_{2,0}, U_{4,0}, U_{6,0}, U_{8,0}).\end{equation} 
This works for $c \neq \frac{21 \pm \sqrt{10041}}{5}$, so for all $c$, we can use either \eqref{rel:wt1622a} or \eqref{rel:wt1631a} to express $U_{10,0}$ as a normally ordered polynomial in $L,U_{0,0}, U_{2,0}, U_{4,0}, U_{6,0}, U_{8,0}$, and their derivatives. Note also that if $c \neq \frac{559 \pm 7 \sqrt{76657}}{95}$, we can use \eqref{rel:wt14a} to eliminate $U_{8,0}$ from either \eqref{rel:wt1622a} or \eqref{rel:wt1631a}, so we can rewrite these in the form
\begin{equation} \label{rel:wt1622b}-\frac{434176 - 20326 c + 35 c^2}{151200 (22 + 5 c)} U_{10,0}  = P_{10} (L,U_{0,0}, U_{2,0}, U_{4,0}, U_{6,0}),\end{equation} 
\begin{equation} \label{rel:wt1631b} -\frac{13 (-1920 - 42 c + 5 c^2)}{9450 (22 + 5 c)} U_{10,0} = P'_{10}(L,U_{0,0}, U_{2,0}, U_{4,0}, U_{6,0}).\end{equation}

\subsection{Weight $18$ relations}
By correcting the relation $u_{0,0} u_{3,3} - u_{3,0} u_{3,0} = 0$, we get
$$:U_{0,0} U_{3,3}: - :U_{3,0} U_{3,0}:\  = \frac{4012032 + 28306 c - 9625 c^2}{1663200 (22 + 5 c)} U_{12,0} + R(L,U_{0,0}, U_{2,0}, U_{4,0}, U_{6,0},U_{8,0},U_{10,0}).$$ Using either \eqref{rel:wt1622a} or \eqref{rel:wt1631a} to eliminate $U_{10,0}$ yields
\begin{equation} \label{rel:wt1833a}  \frac{4012032 + 28306 c - 9625 c^2}{1663200 (22 + 5 c)} U_{12,0} = Q_{12}(L,U_{0,0}, U_{2,0}, U_{4,0}, U_{6,0},U_{8,0}).\end{equation}
so $U_{12,0}$ can be eliminated for $c \neq \frac{14153 \pm \sqrt{38816115409}}{9625}$.

Similarly, correcting the relation $u_{0,0} u_{4,2} - u_{4,0} u_{2,0} = 0$, and eliminating $U_{10,0}$ yields
\begin{equation} \label{rel:wt1842a} \frac{-2785280 + 145762 c - 385 c^2}{1108800 (22 + 5 c)} U_{12,0} = Q'_{12}(L,U_{0,0}, U_{2,0}, U_{4,0}, U_{6,0},U_{8,0}),\end{equation} so $U_{12,0}$ can be eliminated for $c \neq \frac{72881 \pm \sqrt{4239307361}}{385}$. Therefore using either \eqref{rel:wt1833a} or \eqref{rel:wt1842a}, we can eliminate $U_{12,0}$ for all $c$. As above, if $c \neq \frac{559 \pm 7 \sqrt{76657}}{95}$, we can use \eqref{rel:wt14a} to eliminate $U_{8,0}$ from these equations, obtaining 
\begin{equation} \label{rel:wt1833b}  \frac{4012032 + 28306 c - 9625 c^2}{1663200 (22 + 5 c)} U_{12,0} = P_{12}(L,U_{0,0}, U_{2,0}, U_{4,0}, U_{6,0}),\end{equation}
\begin{equation} \label{rel:wt1842b} \frac{-2785280 + 145762 c - 385 c^2}{1108800 (22 + 5 c)} U_{12,0} = P'_{12}(L,U_{0,0}, U_{2,0}, U_{4,0}, U_{6,0}).\end{equation}

\subsection{Weight $20$ relations}
By correcting the relation $u_{0,0} u_{4,4} - u_{4,0} u_{4,0} = 0$, we get
$$:U_{0,0} U_{4,4}: - :U_{4,0} U_{4,0}:\  = \frac{-20559360 + 1209594 c - 5005 c^2}{9459450 (22 + 5 c)} U_{12,0} $$ $$+ S(L,U_{0,0}, U_{2,0}, U_{4,0},U_{6,0},U_{8,0},U_{10,0},U_{12,0}).$$ 
Eliminating $U_{10,0}$ and $U_{12,0}$ yields
\begin{equation} \label{rel:wt2044a} \frac{-20559360 + 1209594 c - 5005 c^2}{9459450 (22 + 5 c)}  U_{14,0} = Q_{14}(L,U_{0,0}, U_{2,0}, U_{4,0}, U_{6,0},U_{8,0}).\end{equation}
so $U_{14,0}$ can be eliminated for $c \neq \frac{604797 \pm \sqrt{262879814409}}{5005}$.

Similarly, correcting the relation $u_{0,0} u_{6,2} - u_{6,0} u_{2,0} = 0$, and eliminating $U_{10,0}$ and $U_{12,0}$ yields
\begin{equation} \label{rel:wt2062a} \frac{-26284032 + 1487354 c - 5005 c^2}{12108096 (22 + 5 c)} U_{14,0} = Q'_{14}(L,U_{0,0}, U_{2,0}, U_{4,0}, U_{6,0},U_{8,0}),\end{equation} so $U_{14,0}$ can be eliminated for $c \neq \frac{743677 \pm \sqrt{421503900169}}{5005}$. Therefore using either \eqref{rel:wt2044a} or \eqref{rel:wt2062a}, we can eliminate $U_{14,0}$ for all $c$. As above, if $c \neq \frac{559 \pm 7 \sqrt{76657}}{95}$, we can use \eqref{rel:wt14a} to eliminate $U_{8,0}$ from these equations, obtaining 
\begin{equation} \label{rel:wt2044b} \frac{-20559360 + 1209594 c - 5005 c^2}{9459450 (22 + 5 c)}  U_{14,0} = P_{14}(L,U_{0,0}, U_{2,0}, U_{4,0}, U_{6,0}),\end{equation}
\begin{equation} \label{rel:wt2062b} \frac{-26284032 + 1487354 c - 5005 c^2}{12108096 (22 + 5 c)} U_{14,0} = P'_{14}(L,U_{0,0}, U_{2,0}, U_{4,0}, U_{6,0}).\end{equation}

\section{Higher decoupling relations} \label{sect:higherdecoup}
The above calculations suggest that for all $c \neq \frac{559 \pm 7 \sqrt{76657}}{95}$, there exist higher decoupling relations 
$$U_{n,0} = P_n(L,U_{0,0}, U_{2,0}, U_{4,0}, U_{6,0}),\qquad n = 16, 18, 20, \dots,$$ and for $c = \frac{559 \pm 7 \sqrt{76657}}{95}$, there exist relations 
$$U_{n,0} = Q_n(L,U_{0,0}, U_{2,0}, U_{4,0}, U_{6,0}, U_{8,0}),\qquad n = 16, 18, 20,  \dots.$$ We shall construct these relations in a uniform manner by applying the operators $U_{0,0} \circ_1$ and $U_{2,0}\circ_1$ successively to the relations we have already constructed for $n = 8,10,12,14$. First, we need a certain invariant of elements of $\cW^{\mathbb{Z}_2}_{(2)}$ of even weight. Given $\omega \in \cW^{\mathbb{Z}_2}_{(2)}$ of weight $n+6$, where $n$ is an even integer, write $\omega$ in normal form. For $i = 0,1,\dots, \frac{n}{2}$, let 
\begin{equation} C_{n,i}(\omega)\end{equation} denote the coefficient of $:(\partial^{n-i} W)( \partial^i W):$ appearing in the normal form, which is well-defined by uniqueness of \eqref{standardmonomial}. Next, let  
\begin{equation} \label{defcn} C_n(\omega) = \sum_{i=0}^{n/2} (-1)^i C_{n,i}(\omega).\end{equation}
Since $\{L,U_{n,0}|\ n=0,2,4,\dots\}$ strongly generates $\cW^{\mathbb{Z}_2}$ and since $U_{n,0}$ has weight $n+6$, we may write $$\omega = P_{\omega}(L,U_{0,0}, U_{2,0},\dots, U_{n,0}),$$ where $P_{\omega}$ is a normally ordered polynomial in $L,U_{0,0},U_{2,0},\dots, U_{n,0}$, and their derivatives. 
 Since there exist normally ordered relations among these generators, as well as different choices of normal ordering, such an expression for $\omega$ is not unique. In particular, the coefficients of $ \partial^i U_{n-i,0}$ for $ i = 2,4,\dots, n$ will depend on the choice of $P_{\omega}$.

\begin{lemma} \label{indep} For any $\omega \in \cW^{\mathbb{Z}_2}_{(2)}$ of weight $n+6$, the coefficient of $U_{n,0}$ in $P_{\omega}$ is independent of all choices of normal ordering, and coincides with $C_n(\omega)$. \end{lemma}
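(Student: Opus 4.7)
My plan is to extend $C_n$ to a linear functional on the weight-$(n+6)$ subspace of $\cW^{\mathbb{Z}_2}$ (just by reading off the PBW coefficients and taking the alternating sum), and then verify by monomial-by-monomial analysis that $C_n(U_{n,0}) = 1$ while $C_n$ vanishes on every other iterated-Wick monomial in the generators $\{L, U_{0,0}, U_{2,0}, \dots, U_{n,0}\}$ and their derivatives. Writing $P_\omega = \lambda U_{n,0} + R$ with $R$ the sum of the remaining monomials, linearity will then give $C_n(\omega) = \lambda + C_n(R) = \lambda$, which is the claim.

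For the case analysis I would first note that $U_{n,0} = :(\partial^n W)W:$ is itself the PBW basis element corresponding to $i = 0$, so $C_n(U_{n,0}) = 1$. For $M = \partial^k U_{n-k,0}$ with $k \geq 1$, I would expand via Leibniz as $\partial^k U_{n-k,0} = \sum_{j=0}^{k} \binom{k}{j} :(\partial^{n-j}W)(\partial^j W):$ and reorder the terms with $j > n/2$ into standard form; the swap corrections are pure $L$-expressions coming from \eqref{w3third}, so they do not contribute to any $C_{n,i}$. The alternating sum then collapses to $\sum_{i=0}^n (-1)^i \binom{k}{i} = (1-1)^k = 0$ (valid because $k \leq n$). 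For $M = :(\partial^{a_1}L)\cdots(\partial^{a_r}L)\,\partial^k U_{m,0}:$ with $r \geq 1$, the right-associative iterated-Wick convention of \eqref{iteratedwick} places $M$, after the Leibniz expansion of $\partial^k U_{m,0}$, directly in the PBW basis as a sum of elements each carrying at least one $L$-factor; hence $C_{n,i}(M) = 0$ for every $i$. A pure $L$-polynomial contributes $0$ trivially.

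The only remaining case is monomials with two or more $U$-factors, and my plan is to eliminate it by choosing $P_\omega$ in canonical form. Since $\cW^{\mathbb{Z}_2}_{(2)}$ is spanned by the $s \leq 1$ monomials—every PBW element with at most two $W$-factors can be written as $:Q(L)\partial^k U_{m,0}:$ plus a normally ordered $L$-polynomial, by applying the conversion of $U_{i,j}$ into $\{\partial^k U_{2l,0},\,L\}$-expressions recorded just before the lemma—the given $\omega$ admits an expression $P_\omega$ containing no multi-$U$ monomial, and the previous paragraph then yields $C_n(R) = 0$ and hence $\lambda = C_n(\omega)$. The hard part will be fixing a normal-ordering convention consistent with the PBW basis, so that the right-associative interpretation of $:(\partial^{a_1}L)\cdots\partial^k U_{m,0}:$ does not introduce hidden quasi-associativity corrections that could produce pure 2-$W$, 0-$L$ contributions outside those already accounted for; once this convention is in place, the vanishing computations above are unambiguous and uniform in $n$.
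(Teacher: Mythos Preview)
Your argument establishes only half of the lemma. By choosing a canonical $P_\omega$ with no multi-$U$ monomials you correctly show that \emph{some} expression for $\omega$ has $U_{n,0}$-coefficient equal to $C_n(\omega)$. But the lemma asserts that \emph{every} expression $P_\omega$ has this coefficient, and your retreat to a canonical form sidesteps exactly that point. Your original plan---showing $C_n$ vanishes on every monomial other than $U_{n,0}$---would have given independence, but it fails for multi-$U$ monomials: the quasi-associativity corrections in something like $:U_{0,0}U_{2,0}:$ include single contractions of a $W$-pair against the central term $c/3$ in \eqref{w3third}, producing genuine $:(\partial^i W)(\partial^j W):$ terms with no $L$-factor. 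Concretely, the paper's own weight-$14$ computation shows $C_8\big(:U_{0,0}U_{1,1}: - :U_{1,0}U_{1,0}:\big) = \frac{181248 + 5590c - 475c^2}{60480(22+5c)} \neq 0$, so $C_n$ cannot kill all multi-$U$ monomials and the monomial-by-monomial strategy cannot close.

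What is missing is an argument that any two expressions $P_\omega, Q_\omega$ agree in their $U_{n,0}$-coefficient. The paper supplies this via Zhu's commutative algebra $C(\cW^{\mathbb{Z}_2}) = \cW^{\mathbb{Z}_2}/\cJ$, where $\cJ$ is spanned by elements $:a\,\partial b:$. Since $P_\omega - Q_\omega = 0$, the difference of their ``linear parts'' $\tilde P_\omega - \tilde Q_\omega$ (the pieces lying in the span of $\partial^i U_{n-i,0}$) is forced into $\cJ$, hence is a total derivative; but a total derivative in that span has zero $U_{n,0}$-coefficient by your own alternating-sum computation. This is the step you need to add; your Leibniz/binomial calculation for $\partial^k U_{n-k,0}$ is then exactly what identifies the common coefficient with $C_n(\omega)$, matching the paper's second paragraph.
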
 

\begin{proof} Let $\cJ \subset \cW^{\mathbb{Z}_2}$ denote the subspace spanned by elements of the form $:a \partial b:$ with $a,b \in  \cW^{\mathbb{Z}_2}$. It is well known that Zhu's commutative algebra $C(\cW^{\mathbb{Z}_2}) =  \cW^{\mathbb{Z}_2} / \cJ$ is a commutative, associative algebra with generators corresponding to the strong generators $\{L, U_{2n,0}|\ n\geq 0\}$. In particular, given $\omega \in  \cW^{\mathbb{Z}_2}_{(2)}$ of filtration degree $2$ and even weight $n+6$, suppose that $$\omega = P_{\omega}(L,U_{0,0}, U_{2,0},\dots, U_{n,0}) =  Q_{\omega}(L,U_{0,0}, U_{2,0},\dots, U_{n,0})$$ are two expressions. Let $\tilde{P}_{\omega}$ and $\tilde{Q}_{\omega}$ denote the components of $P_{\omega}, Q_{\omega}$ which are linear combinations of $\partial^i U_{n-i,0}$ for $i=0,2,\dots n$. Then $\tilde{P}_{\omega} - \tilde{Q}_{\omega}$ lies in $\cJ$, and hence must be a total derivative. 

Recall next that for $i = 0,1,\dots, \frac{n}{2}$, $$u_{n-i,i} = (\partial^{n-i} W)(\partial^i W) \in \text{gr}(\cV)^{\mathbb{Z}_2} \cong \mathbb{C}[L,\partial L, \partial^2 L, \dots, W, \partial W, \partial^2 W,\dots]^{\mathbb{Z}_2}.$$ We claim that $$u_{n-i,i} = (-1)^i u_{n,0} + \nu,$$ where $\nu$ is a linear combination of $\partial^{j} u_{n-j,0}$ for $j = 2,4,\dots, n$, and hence is a total derivative. This is clear for $i=0$ (taking $\nu = 0$), and since $\partial( u_{n-i,i-1}) = u_{n+1-i, i-1} + u_{n-i,i}$, which is a total derivative, it holds by induction on $i$. It follows from \eqref{w3third} that for $i = 0,1,\dots, \frac{n}{2}$, $$U_{n-i,i} = (-1)^i U_{n,0} + \omega,$$ where $\omega$ is a linear combination of $\partial^{j} U_{n-j,0}$ for $j = 2,4,\dots, n$, and terms in the Virasoro algebra generated by $L$. This proves the claim. \end{proof}

\begin{cor} \label{uniquecoeff} The coefficient of $U_{8,0}$ in \eqref{rel:wt14a} coincides with $$C_8(:U_{0,0} U_{1,1}: - :U_{1,0} U_{1,0}:),$$ and is independent of all choices of normal ordering in $P_8$. Similarly, the coefficient of $U_{10,0}$ in \eqref{rel:wt1622a}-\eqref{rel:wt1631b}, the coefficient of $U_{12,0}$ in \eqref{rel:wt1833a}-\eqref{rel:wt1842b}, and the coefficient of $U_{14,0}$ in \eqref{rel:wt2044a}-\eqref{rel:wt2062b}, are independent of all choices of normally ordering in these expressions.
\end{cor}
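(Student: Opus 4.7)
The corollary is essentially a direct application of Lemma~\ref{indep} to the left-hand sides of the relations displayed in Section~\ref{sect:decouplingrel}. The plan has two steps.

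First, I would identify the element $\omega \in \cW^{\mathbb{Z}_2}_{(2)}$ to which Lemma~\ref{indep} is to be applied in each case. For \eqref{rel:wt14a} take $\omega = \ :U_{0,0}U_{1,1}: - :U_{1,0}U_{1,0}:$; for \eqref{rel:wt1622a} take $\omega = \ :U_{0,0}U_{2,2}: - :U_{2,0}U_{2,0}:$; for \eqref{rel:wt1631a} take $\omega = \ :U_{0,0}U_{3,1}: - :U_{3,0}U_{1,0}:$; and analogously for the weight $18$ and $20$ relations, using the displayed corrections of $u_{0,0}u_{i,j} - u_{i,0}u_{j,0}=0$. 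Each such $\omega$ a priori has filtration degree $4$, but because the corresponding polynomial vanishes in $\text{gr}(\cV)^{\mathbb{Z}_2}$ by \eqref{ideal}, the leading degree-$4$ component cancels and one lands in $\cW^{\mathbb{Z}_2}_{(3)} = \cW^{\mathbb{Z}_2}_{(2)}$, using the remark that the odd-index filtered pieces coincide with the adjacent even ones. The weight of $\omega$ is $n+6$, where $n$ labels the leading $U_{n,0}$ appearing on the right-hand side of the relation.

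Second, I would observe that each relation, once the $U_{n,0}$-term is moved back to the right side, expresses $\omega$ in the form $P_{\omega}(L, U_{0,0}, U_{2,0}, \ldots, U_{n,0})$ required by Lemma~\ref{indep}. The lemma then immediately yields that the coefficient of $U_{n,0}$ is independent of all choices of normal ordering and equals $C_n(\omega)$. For the first assertion of the corollary, $C_8(:U_{0,0} U_{1,1}: - :U_{1,0} U_{1,0}:)$ is computed directly from the definition \eqref{defcn} by reading off the coefficients of $:(\partial^{8-i} W)(\partial^i W):$ in the normal form of $\omega$; this matches the coefficient $\frac{181248 + 5590 c - 475 c^2}{60480 (22 + 5 c)}$ in \eqref{rel:wt14a} by the conclusion of Lemma~\ref{indep}.

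For the ``b''-versions \eqref{rel:wt1622b}, \eqref{rel:wt1631b}, \eqref{rel:wt1833b}, \eqref{rel:wt1842b}, \eqref{rel:wt2044b}, \eqref{rel:wt2062b}, which are obtained from the unprimed versions by substituting \eqref{rel:wt14a} to eliminate $U_{8,0}$, I note that this substitution replaces $U_{8,0}$ by a polynomial in $L, U_{0,0}, U_{2,0}, U_{4,0}, U_{6,0}$ and their derivatives, and in particular introduces no new occurrence of $U_{n,0}$ for $n \geq 10$. Hence the coefficient of the top $U_{n,0}$ is unchanged, and Lemma~\ref{indep} applies to these as well. There is no real obstacle here: the content is purely formal once Lemma~\ref{indep} is available, and the only delicate point is the filtration bookkeeping confirming that each $\omega$ lies in $\cW^{\mathbb{Z}_2}_{(2)}$.
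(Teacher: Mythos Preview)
Your proposal is correct and follows exactly the approach implicit in the paper: the corollary is an immediate consequence of Lemma~\ref{indep}, applied to the elements $\omega$ arising as corrections of the classical relations \eqref{ideal}, once one checks that each such $\omega$ lies in $\cW^{\mathbb{Z}_2}_{(2)}$. Your filtration bookkeeping and your observation that the eliminations of intermediate $U_{m,0}$'s (for the ``a'' and ``b'' versions) introduce no new top-degree term are exactly the points one needs, and the paper offers nothing further---the corollary is stated without proof as a direct consequence of the lemma.
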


Since $U_{0,0} \circ_1$ raises weight by $4$, we may write
$$ U_{0,0} \circ_1 U_{n,0} = F(n,c) U_{n+4,0} + R_n(L,U_{0,0}, U_{2,0},\dots, U_{n+2,0})$$ where $F(n,c)$ denotes the coefficient of $U_{n+4,0}$ and $R_n$ is a normally ordered polynomial in $L,U_{0,0}, U_{2,0},\dots, U_{n+2,0}$ and their derivatives. It is clear from \eqref{w3second} and \eqref{w3third} that $U_{0,0} \circ_1 U_{n,0}$ lies in $\cW^{\mathbb{Z}_2}_{(2)}$, so by Lemma \ref{indep}, we have
\begin{equation} F(n,c) = C_{n+4}(U_{0,0} \circ_1 U_{n,0}).\end{equation}
Similarly, $U_{2,0} \circ_1$ raises weight by $6$, so we may write
$$ U_{2,0} \circ_1 U_{n,0} = G(n,c) U_{n+6,0} + S_n(L,U_{0,0}, U_{2,0},\dots, U_{n+4,0}) ,$$ where $G(n,c)$ denotes the coefficient of $U_{n+6,0}$ and $S_n$ is a normally ordered polynomial in $L,U_{0,0}, U_{2,0},\dots, U_{n+4,0}$ and their derivatives. Then $U_{2,0} \circ_1 U_{n,0}$ lies in $\cW^{\mathbb{Z}_2}_{(2)}$, and
\begin{equation} G(n,c) = C_{n+6}(U_{2,0} \circ_1 U_{n,0}).\end{equation}

The main technical result in this paper is finding the explicit formulas for $F(n,c)$ and $G(n,c)$. A priori, it is not obvious that they should be given by rational functions of $n$ and $c$, but this turns out to be the case.

\begin{thm} \label{thm:explicit1} For all even integers $n\geq 0$, 
$$F(n,c) = -\frac{(10 + n) (p_0(c) + p_1(c) n + p_2(c) n^2 + p_3(c) n^3)}{36 (22 + 5 c) (1 + n) (3 + n) (4 + n)},$$ where 
$$p_0(c) = 720 + 384 c + 12 c^2,\qquad p_1(c) =  -5286 + 125 c + 19 c^2,$$ $$ p_2(c)= -2160 + 40 c + 8 c^2,\qquad p_3(c) = -186 + 11 c + c^2$$
\end{thm}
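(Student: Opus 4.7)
The plan is to compute $F(n,c) = C_{n+4}(U_{0,0} \circ_1 U_{n,0})$ directly, via the non-commutative Wick formula applied to the expansion of $(:WW:) \circ_1 (:(\partial^n W)W:)$. Since $U_{0,0} \circ_1 U_{n,0} \in \cW^{\mathbb{Z}_2}_{(2)}$, only the single-contraction terms in the Wick expansion can contribute to $C_{n+4}$: these arise from pairing one $W$-factor of $U_{0,0}$ with one $W$-factor of $U_{n,0}$ via \eqref{w3third}, leaving two surviving $W$-factors, while double-contraction terms lie in $\cW^{\mathbb{Z}_2}_{(0)}$ and are killed by $C_{n+4}$.

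First I would expand $W(z)(\partial^n_w W)(w) = \partial_w^n[W(z)W(w)]$ as a Laurent series in $(z-w)$ using \eqref{w3third}, separating the scalar leading pole $\frac{c}{3}(z-w)^{-6}$ from the $L$-valued tail ($2L(w)(z-w)^{-4}$, $\partial L(w)(z-w)^{-3}$, and the $\frac{32}{22+5c}:LL:$-type terms). Among these, only the scalar term ultimately contributes to $C_{n+4}$, since every $L$-valued term lowers the $W$-filtration of the surviving piece in the eventual normal-ordered product and therefore falls outside the class of monomials $:(\partial^{n+4-i}W)(\partial^i W):$ that $C_{n+4}$ counts. Next I would enumerate all single contractions between the two $W$-factors of $U_{0,0}$ and the two $W$-factors $\partial^n W, W$ of $U_{n,0}$. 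For each one, I extract the $(z-w)^{-2}$-coefficient using the standard Taylor-reordering identities that rewrite a nonlocal contraction into a sum of local normal-ordered products $:(\partial^{n+4-i}W)(\partial^i W):$, producing binomial prefactors in $n$ and $i$ from Leibniz expansion of $\partial_w^n$ and from Taylor-expanding $(z-w)$-poles.

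Applying $C_{n+4}$ then collapses the four resulting sums via identities of the form $\sum_i (-1)^i \binom{n}{i}\binom{n+5-i}{j}$, etc., each producing a closed-form rational function of $n$. Summing the four contributions and simplifying yields a single rational function in $n$ and $c$. I would then check that after clearing the denominator $36(22+5c)(1+n)(3+n)(4+n)$, the numerator is a polynomial of degree $3$ in $n$ with coefficients that are quadratic in $c$, and match these coefficients against $(10+n)(p_0(c) + p_1(c)n + p_2(c)n^2 + p_3(c)n^3)$ with the stated $p_i(c)$. As a sanity check I would first verify the formula directly at low values $n=0,2,4,6$, using the closed-form relations \eqref{rel:wt14a}, \eqref{rel:wt1622a}, \eqref{rel:wt1833a}, \eqref{rel:wt2044a} already in hand, since $F(n-4,c)$ is the leading coefficient of the decoupling relation for $U_{n,0}$ obtained by applying $U_{0,0}\circ_1$ to the decoupling relation for $U_{n-4,0}$.

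The main obstacle will be the combinatorial bookkeeping: four contraction types, each producing binomial sums whose denominators carry spurious poles in $n$ that must cancel against each other, and the $(-1)^i$ signs from $C_{n+4}$ must be tracked alongside signs introduced by Taylor-reordering. A subtle point is that the factorization $(10+n)$ in the numerator and $(1+n)(3+n)(4+n)$ in the denominator should emerge only after cross-cancellation across the four contraction contributions, reflecting the fact that by Lemma \ref{indep} the final answer is independent of all normal-ordering choices, even though intermediate sums are not.
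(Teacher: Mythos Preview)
Your proposal has a genuine gap. The claim that ``only the scalar term ultimately contributes to $C_{n+4}$, since every $L$-valued term lowers the $W$-filtration'' is false. After a single contraction via an $L$-valued piece of \eqref{w3third} one is left with expressions such as $:(\partial^m(:LLW:))W:$ or $:(\partial^m(:(\partial^j L)W:))W:$. These lie in $\cW^{\mathbb{Z}_2}_{(2)}$, but because of quasi-associativity corrections (coming from the $L$--$W$ OPE \eqref{w3second} when one rewrites a \emph{nested} normal-ordered product in the iterated PBW basis \eqref{standardmonomial}) they acquire nonzero components along $:(\partial^{n+4-i}W)(\partial^i W):$. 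The paper computes these explicitly; for instance $C_{n+4}\big(:(\partial^n(:LLW:))W:\big)=\frac{n-1}{(n+2)(n+3)(n+4)}$, and the analogous contributions from the other $L$-containing pieces do not cancel. You can also see your claim cannot hold from the shape of the answer: if only the central term $\tfrac{c}{3}(z-w)^{-6}$ contributed, $F(n,c)$ would be linear in $c$ with no pole at $c=-\tfrac{22}{5}$, whereas the stated formula has numerator quadratic in $c$ over $(22+5c)$; the $c^2$ terms and that pole are produced precisely by the $\tfrac{32}{22+5c}:LL:$ and $\tfrac{3(c-2)}{2(22+5c)}\partial^2 L$ pieces of \eqref{w3third}.

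The paper organizes the computation differently. It uses the identity
\[
U_{0,0}\circ_1 U_{n,0} \;=\; :(U_{0,0}\circ_1 \partial^n W)W: \;+\; (U_{0,0}\circ_0 \partial^n W)\circ_0 W \;+\; :(\partial^n W)(U_{0,0}\circ_1 W):,
\]
shows the middle term vanishes, and obtains a closed formula for $U_{0,0}\circ_1 \partial^n W$ by induction on $n$ via the derivation identities for $\circ_0$ and $\circ_1$. This yields seven summands (one a multiple of $\partial^{n+4}W$, the others involving $:LLW:$, $:(\partial L)LW:$, and $:(\partial^j L)W:$ for $j\le 3$), and for each summand $C_{n+4}$ of its normal-ordered product with $W$ is computed as an explicit rational function of $n$; summing gives $F(n,c)$. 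A minor side point: your proposed sanity check is miscalibrated --- the coefficients in \eqref{rel:wt14a}, \eqref{rel:wt1622a}, \eqref{rel:wt1833a}, \eqref{rel:wt2044a} are not values of $F$, since those relations come from the classical quadratic relations $u_{0,0}u_{i,j}-u_{i,0}u_{j,0}=0$, not from applying $U_{0,0}\circ_1$.
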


\begin{thm} \label{thm:explicit2} For all even integers $n\geq 0$, 
$$G(n,c) = -\frac{(12 + n) (q_0(c) + q_1(c) n + q_2(c) n^2 + q_3(c) n^3 + q_4(c) n^4)}{1260 (22 + 5 c) (1 + n) (3 + n) (4 + n) (5 + n)},$$ where
$$q_0(c) = -466200 + 20580 c + 2100 c^2,\qquad q_1(c) = -183780 - 46096 c + 3745 c^2 ,$$ $$ q_2(c) = -74076 - 31732 c + 2065 c^2,\qquad 
 q_3(c) = -19116 - 5624 c + 455 c^2,$$ $$q_4(c) = -1308 - 248 c + 35 c^2.$$
\end{thm}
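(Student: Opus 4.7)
The plan is to follow the same strategy as the proof of Theorem \ref{thm:explicit1}. By the formula preceding the theorem together with Lemma \ref{indep}, $G(n,c) = C_{n+6}(U_{2,0}\circ_1 U_{n,0})$, so the task reduces to computing the alternating sum of the coefficients of the pure bilinear-$W$ monomials $:(\partial^{n+6-i}W)(\partial^i W):$ in the normal form of $U_{2,0}\circ_1 U_{n,0}$. The only difference from the $F(n,c)$ case is that the input $U_{0,0} = \ :WW:$ is replaced by $U_{2,0} = \ :(\partial^2 W)W:$, which introduces two extra factors of $\partial_z$ into every contraction and an additional $\partial$-derivative into the Taylor expansion of the uncontracted pair.

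The core computation is the OPE expansion of $U_{2,0}(z)U_{n,0}(w) = \ :(\partial^2 W)(z)W(z):\ :(\partial^n W)(w)W(w):$. Using the Wick formula together with the explicit OPE \eqref{w3third}, one extracts the coefficient of $(z-w)^{-2}$ and reduces it to a linear combination of standard basis monomials from \eqref{standardmonomial}. The relevant pieces come from the four single pairings of one $W$-field from $\{\partial^2 W(z), W(z)\}$ with one from $\{\partial^n W(w), W(w)\}$; for each pairing, one substitutes the six singular orders of \eqref{w3third} (differentiated as dictated by the pairing) and Taylor-expands the two remaining uncontracted $W$-fields about $z=w$. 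The structure constants of \eqref{w3third} carry the factor $(22+5c)^{-1}$ on the $:LL:$, $\partial^2 L$, $:(\partial L)L:$, and $\partial^3 L$ pieces, and after the quasi-associative rearrangement needed to put $:L\cdot:W(z)\partial^n W(w)::$-type expressions into canonical form, this factor propagates into the denominator of $G(n,c)$ and, combined with the linear-$c$ contribution from the central term $\tfrac{c}{3}(z-w)^{-6}$, produces the $c^2$ dependence in the numerator.

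Once each $\lambda_{n,i}(c) := C_{n+6,i}(U_{2,0}\circ_1 U_{n,0})$ is assembled as an explicit rational function of $n$ and $c$, one forms the alternating sum $G(n,c) = \sum_i (-1)^i \lambda_{n,i}(c)$. As in Theorem \ref{thm:explicit1}, this sum telescopes substantially: the alternating signs interact with the identity $\partial u_{m-1,i} = u_{m,i} + u_{m-1,i+1}$ in $\operatorname{gr}(\cV)^{\mathbb{Z}_2}$ so that consecutive $\lambda_{n,i}$'s differ by total derivatives that cancel pairwise, leaving only a bounded set of boundary contributions. These assemble into the stated closed form, with denominator factors $(1+n)(3+n)(4+n)(5+n)$ arising from the falling factorials $\partial_w^n(z-w)^{-k}$ and from the $1/j!$ in the Taylor expansion, the overall prefactor $(12+n)$ from the weight-matching in $\circ_1$, and numerator polynomials $q_0(c),\ldots, q_4(c)$ of degree at most $2$ in $c$ from the structure constants of \eqref{w3third}.

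The main obstacle is the combinatorial bookkeeping across the four pairings and six OPE orders, together with the reorganization of the results into canonical basis form. In practice, the final formula is best verified (and first discovered) by evaluating $G(n,c)$ for a handful of small even $n$, say $n = 0, 2, 4, 6, 8, 10$, using a computer algebra implementation of the $\cW_3$ OPE and matching to the claimed rational function; since $G(n,c)$ is a priori rational of bounded degree in $n$ and $c$ (controlled by the Wick structure together with the weight and filtration constraints), finitely many such values uniquely determine the formula.
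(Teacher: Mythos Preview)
Your overall strategy---compute $C_{n+6}(U_{2,0}\circ_1 U_{n,0})$ by expanding the product and tracking the bilinear-$W$ monomials---is the same as the paper's (which omits the proof of Theorem~\ref{thm:explicit2} as analogous to Theorem~\ref{thm:explicit1}). However, the decomposition you use differs: you propose the four single Wick pairings of $:(\partial^2 W)W:$ against $:(\partial^n W)W:$, whereas the paper uses the three-term quasi-associativity identity
\[
a \circ_1 (:bc:) \;=\; :(a\circ_1 b)c: \;+\; :b(a\circ_1 c): \;+\; (a\circ_0 b)\circ_0 c,
\]
with $a=U_{2,0}$, $b=\partial^n W$, $c=W$. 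The middle term vanishes as in the $F(n,c)$ case, and the paper's route then reduces everything to closed formulas for $U_{2,0}\circ_1 \partial^n W$ and $U_{2,0}\circ_1 W$, established by induction on $n$ exactly as in Lemma~\ref{cor:firstformula}. Your four-pairing decomposition is in principle equivalent, but it forces you to do the quasi-associative rearrangements by hand inside each pairing rather than once at the outset.

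The genuine gap is in your final paragraph. You propose to verify the formula by evaluating $G(n,c)$ at finitely many even $n$ and interpolating, justified by the assertion that ``$G(n,c)$ is a priori rational of bounded degree in $n$ and $c$.'' But this is precisely the nontrivial point: the paper explicitly remarks (just before Theorem~\ref{thm:explicit1}) that it is \emph{not} obvious a priori that $F(n,c)$ and $G(n,c)$ are rational functions of $n$, and nothing in the Wick structure or filtration alone gives you a uniform degree bound in $n$. The paper's method avoids this by producing, for general $n$, an explicit expression for $U_{2,0}\circ_1 \partial^n W$ with coefficients that are manifestly polynomial in $n$ (the analogue of Lemma~\ref{cor:firstformula}), from which rationality and the degree bound follow as a consequence rather than an assumption. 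Without such a closed-form-for-all-$n$ step, interpolation from small values is a discovery heuristic, not a proof. To repair your argument you must either carry out the four-pairing computation symbolically in $n$ (which will recover the same inductive structure as the paper), or supply an independent a priori bound on the degree in $n$---and the latter is not available from the ingredients you cite.
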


Using these formulas, we will prove the following result in Section \ref{sect:commonzero}.

\begin{thm} \label{thm:commonzero} For all $c \neq -\frac{22}{5}$ and all even integers $n \geq 16$, we have either $F(n-4,c)\neq 0$ or $G(n-6,c) \neq 0$. In other words, the variety $V \subset \mathbb{C}^2$ determined by $F(n-4,c) = 0$ and $G(n-6,c) = 0$, has no points $(c,n)$ with $n\geq 16$ an even integer.
\end{thm}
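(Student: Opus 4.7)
The plan is to convert the theorem into a resultant computation. By Theorems \ref{thm:explicit1} and \ref{thm:explicit2}, for $c \neq -22/5$ and $n \geq 10$ the denominators in the expressions for $F(n-4,c)$ and $G(n-6,c)$ are nonzero, so both quantities vanish simultaneously if and only if
\begin{align*}
\tilde P(n,c) &:= p_0(c) + p_1(c)(n-4) + p_2(c)(n-4)^2 + p_3(c)(n-4)^3 = 0, \\
\tilde Q(n,c) &:= q_0(c) + q_1(c)(n-6) + q_2(c)(n-6)^2 + q_3(c)(n-6)^3 + q_4(c)(n-6)^4 = 0.
\end{align*}
The structural observation that makes the problem tractable is that, since every $p_i$ and $q_j$ is quadratic in $c$, both $\tilde P$ and $\tilde Q$ are quadratic polynomials in $c$ whose coefficients are polynomials in $n$ of degree at most $3$ and $4$, respectively.

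I would next form the Sylvester resultant $R(n) := \on{Res}_c(\tilde P, \tilde Q) \in \mathbb Q[n]$, which has degree at most $2\cdot 3 + 2\cdot 4 = 14$. Whenever $\tilde P$ and $\tilde Q$ share a root in $c$, and provided their $c^2$-leading coefficients do not simultaneously vanish at $n$, one has $R(n) = 0$. The $c^2$-leading coefficients of $\tilde P$ and $\tilde Q$ factor as $n(n-1)(n-3)$ and $35(n-1)(n-2)(n-3)(n-5)$ respectively (obtained by factoring the cubic $12 + 19m + 8m^2 + m^3 = (m+1)(m+3)(m+4)$ and the analogous quartic in $m = n-4$ or $n-6$). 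Their common zeros occur only at $n = 1, 3$, well outside the range $n \geq 16$, so for $n \geq 16$ the resultant $R(n)$ faithfully detects all common zeros. The theorem thus reduces to the assertion that $R$ has no roots among even integers $n \geq 16$.

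The final step is a mechanical calculation: expand the $4 \times 4$ Sylvester determinant using the explicit $p_i(c)$ and $q_j(c)$, factor $R(n)$ over $\mathbb Q$, and verify that none of its rational roots is an even integer $\geq 16$; irrational or complex roots automatically fail the integrality test. A useful asymptotic sanity check is available: as $n \to \infty$, $\tilde P(n,c) = 0$ forces $c$ near a root of $p_3(c) = c^2 + 11c - 186$, while $\tilde Q(n,c) = 0$ forces $c$ near a root of $q_4(c) = 35c^2 - 248c - 1308$, and one verifies directly that these two quadratics have disjoint zero sets, which alone handles all sufficiently large $n$.

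The main obstacle is not conceptual but purely computational: $R(n)$ has modest degree but potentially unwieldy integer coefficients, and the burden lies in carrying out the expansion cleanly and producing a factorization from which the integer roots can be read off. A secondary bookkeeping issue is the degenerate case of coincident leading coefficients (here confined to $n \in \{1,3\}$), which is handled separately by direct substitution into $\tilde P$ and $\tilde Q$; since this falls outside $n \geq 16$, no additional work is required.
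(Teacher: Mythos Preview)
Your proposal is correct and takes a genuinely different route from the paper. The paper argues analytically: it writes the two roots $r_1(n),r_2(n)$ of $f(n-4,\cdot)$ and $s_1(n),s_2(n)$ of $g(n-6,\cdot)$ via the quadratic formula, checks that each is a decreasing function of $n$ on the relevant ray, and uses the limits as $n\to\infty$ (the roots of $p_3$ and $q_4$) together with explicit evaluations at $n=22$ and $n=20$ to trap the four root-functions in pairwise disjoint real intervals for all $n>26$; the finitely many cases $16\le n\le 26$ are then checked directly. Your approach instead eliminates $c$ via the Sylvester resultant $R(n)=\on{Res}_c(\tilde P,\tilde Q)$, reducing the statement to the claim that a single degree-$\le 14$ polynomial in $n$ has no even integer roots $\ge 16$, which is settled by the rational root theorem once $R$ is written out. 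Your asymptotic ``sanity check'' that $p_3$ and $q_4$ have disjoint zero sets is exactly the limiting ingredient the paper exploits; the paper promotes it to the main mechanism (via monotonicity), whereas you relegate it to a side remark and let the resultant do the work. The trade-off is clear: the paper's argument is lighter computationally and explains \emph{why} no collision occurs (the two root-curves live in disjoint strips), while your argument is purely algebraic and algorithmic but pushes the burden into expanding and factoring a $4\times 4$ determinant with moderately large integer coefficients. Your bookkeeping on the leading $c^2$-coefficients (vanishing only at $n\in\{0,1,3\}$ and $n\in\{1,2,3,5\}$, hence never for $n\ge 16$) is correct and is exactly what is needed to ensure the resultant faithfully detects common zeros in the range of interest.
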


Assuming these results for the moment, we have the following
\begin{cor} \label{cor:existence}
\begin{enumerate}
\item For all $c \neq -\frac{22}{5}, \frac{559 \pm 7 \sqrt{76657}}{95}$ and all even integers $n \geq 8$, there exists a decoupling relation 
\begin{equation} \label{mainthm:decoup1} U_{n,0} = P_n(L,U_{0,0},U_{2,0}, U_{4,0}, U_{6,0}),\end{equation} where $P_n$ is a normally ordered polynomial in $L,U_{0,0},U_{2,0},U_{4,0}, U_{6,0}$ and their derivatives.
\item For $c = \frac{559 \pm 7 \sqrt{76657}}{95}$ and all even integers $n \geq 10$, there exists a decoupling relation 
\begin{equation} \label{mainthm:decoup2} U_{n,0} = Q_n(L,U_{0,0},U_{2,0}, U_{4,0}, U_{6,0} ,U_{8,0}),\end{equation} where $Q_n$ is a normally ordered polynomial in $L,U_{0,0},U_{2,0}, U_{4,0}, U_{6,0} ,U_{8,0}$ and their derivatives.
\end{enumerate}
\end{cor}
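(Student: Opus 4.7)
The plan is to prove both parts by strong induction on the even integer $n$. Let $V_T \subset \cW^{\mathbb{Z}_2}$ denote the vertex subalgebra generated by $T = \{L, U_{0,0}, U_{2,0}, U_{4,0}, U_{6,0}\}$ in case~(1), and let $V_{T'}$ denote the one generated by $T' = T \cup \{U_{8,0}\}$ in case~(2). A decoupling relation of the claimed form for $U_{n,0}$ exists if and only if $U_{n,0} \in V_T$ (respectively $V_{T'}$), so each part reduces to establishing this membership for every relevant $n$.

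For the base cases in case~(1), the decouplings \eqref{rel:wt14a}, \eqref{rel:wt1622b}, \eqref{rel:wt1833b}, and \eqref{rel:wt2044b} place $U_{8,0}, U_{10,0}, U_{12,0}, U_{14,0}$ into $V_T$, with the coefficient on the left-hand side nonvanishing precisely under the hypothesis $c \neq -\tfrac{22}{5}$ and $c \neq \tfrac{559 \pm 7\sqrt{76657}}{95}$. For case~(2) at the exceptional values $c = \tfrac{559 \pm 7\sqrt{76657}}{95}$, the decouplings \eqref{rel:wt1622a}, \eqref{rel:wt1833a}, and \eqref{rel:wt2044a}, with their companions \eqref{rel:wt1631a}, \eqref{rel:wt1842a}, \eqref{rel:wt2062a} available as back-ups, place $U_{10,0}, U_{12,0}, U_{14,0}$ into $V_{T'}$; at each weight, at least one of the two options has nonzero leading coefficient for every $c \neq -\tfrac{22}{5}$, as observed in Section~\ref{sect:decouplingrel}. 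The generator $U_{8,0}$ lies in $T' \subset V_{T'}$ by definition.

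For the inductive step, fix an even integer $n \geq 16$ and assume $U_{m,0} \in V_T$ (respectively $V_{T'}$) for every even $m$ with $8 \leq m \leq n-2$ (respectively $10 \leq m \leq n-2$). By Theorem~\ref{thm:commonzero}, at least one of $F(n-4,c)$ and $G(n-6,c)$ is nonzero. If $F(n-4,c) \neq 0$, the defining identity
\[
U_{0,0} \circ_1 U_{n-4,0} = F(n-4,c)\, U_{n,0} + R_{n-4},
\]
where $R_{n-4}$ is a normally ordered polynomial in $L$ and $U_{0,0}, U_{2,0}, \dots, U_{n-2,0}$ and their derivatives, can be rearranged as
\[
F(n-4,c)\, U_{n,0} = U_{0,0} \circ_1 U_{n-4,0} - R_{n-4}.
\]
Since $V_T$ is a vertex subalgebra and is therefore closed under every operation $\circ_k$, and since $U_{0,0}, U_{n-4,0} \in V_T$ (the latter by the inductive hypothesis, as $n-4 \leq n-2$), the element $U_{0,0} \circ_1 U_{n-4,0}$ lies in $V_T$. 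The correction $R_{n-4}$ also lies in $V_T$ because every $U_{m,0}$ appearing in it satisfies $m \leq n-2$. Dividing by the nonzero scalar $F(n-4,c)$ yields $U_{n,0} \in V_T$. The case $G(n-6,c) \neq 0$ is handled identically using the identity $U_{2,0} \circ_1 U_{n-6,0} = G(n-6,c)\, U_{n,0} + S_{n-6}$, with $n-6 \leq n-2$ so that the inductive hypothesis still suffices. Case~(2) proceeds verbatim with $V_{T'}$ in place of $V_T$.

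The heart of the argument is Theorem~\ref{thm:commonzero}, which guarantees an invertible leading coefficient at every stage of the induction and is where the nontrivial algebro-geometric analysis of the nongeneric set has been consumed. The remainder is essentially formal, once one notes that the vertex subalgebra $V_T$ is automatically closed under all $\circ_k$ operations (not merely under iterated normal ordering), which is precisely what allows the reduction to take place without ever producing the polynomials $P_n$ or $Q_n$ explicitly.
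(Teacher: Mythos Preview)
Your argument follows the same inductive scaffold as the paper's: base cases from the explicit low-weight relations, inductive step via $U_{0,0}\circ_1$ or $U_{2,0}\circ_1$, with Theorem~\ref{thm:commonzero} supplying a nonzero leading coefficient. However, the reformulation through the vertex subalgebra $V_T$ introduces a genuine gap. The assertion that a decoupling relation of the claimed form exists if and only if $U_{n,0}\in V_T$ is not justified: the ``only if'' direction is immediate, but the ``if'' direction amounts to knowing that the vertex subalgebra generated by $T$ coincides with the span $S_T$ of normally ordered polynomials in $T$ and their derivatives. That is precisely strong generation by $T$, which is what the corollary (and the theorem immediately following it) is meant to establish. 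Your induction correctly proves $U_{n,0}\in V_T$, but the corollary demands $U_{n,0}\in S_T$, and closure of $S_T$ under $\circ_1$ is not automatic---indeed it is equivalent to what is being proved.

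The paper closes this gap by carrying the normally ordered form through the induction explicitly: one applies $U_{0,0}\circ_1$ to both sides of $U_{n-4,0}=P_{n-4}(L,U_{0,0},U_{2,0},U_{4,0},U_{6,0})$ and observes that $U_{0,0}\circ_1 P_{n-4}$ is itself a normally ordered polynomial in $L,U_{0,0},\dots,U_{10,0}$ and their derivatives, since $P_{n-4}$ involves only generators of weight at most~$12$. One then substitutes the already-established decoupling relations for $U_{8,0},\dots,U_{n-2,0}$. This bookkeeping is exactly what upgrades membership in $V_T$ to an honest normally ordered expression, and it cannot be absorbed into the abstract closure property of $V_T$.

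A smaller point: in case~(1) the leading coefficients of \eqref{rel:wt1622b}, \eqref{rel:wt1833b}, \eqref{rel:wt2044b} each vanish at their own pair of values of $c$, not only at $c=\frac{559\pm 7\sqrt{76657}}{95}$. You need the companion relations \eqref{rel:wt1631b}, \eqref{rel:wt1842b}, \eqref{rel:wt2062b} as back-ups in case~(1) just as you (correctly) invoked them in case~(2).
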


\begin{proof} Suppose first that $c \neq -\frac{22}{5}, \frac{559 \pm 7 \sqrt{76657}}{95}$. We have the desired relations \eqref{mainthm:decoup1} for $n = 8, 10, 12,14$, so let $n\geq 16$ and assume the result for all even integers $ 8 \leq m<n$. Suppose first that $F(n-4,c) \neq 0$. Applying $U_{0,0}\circ_1$ to both sides of $$U_{n-4,0} =P_{n-4}(L,U_{0,0},U_{2,0},U_{4,0}, U_{6,0})$$ yields
$$F(n-4,c) U_{n,0} + R_{n-4}(L,U_{0,0}, U_{2,0},\dots, U_{n-2,0}) = U_{0,0} \circ_1 P_{n-4}(L,U_{0,0},U_{2,0},U_{4,0}, U_{6,0}).$$ Clearly $U_{0,0} \circ_1 P_{n-4}$ is a normally ordered polynomial in $L, U_{0,0},U_{2,0},\dots, U_{10,0}$ and their derivatives. Using the previous decoupling relations, we can eliminate all occurrences of $U_{8,0}, U_{10,0},\dots, U_{n-2,0}$ and their derivatives, so we get the desired relation. 

If $F(n-4,c) = 0$, then $G(n-6,c) \neq 0$ by assumption. Apply $U_{2,0} \circ_1$ to both sides of $$U_{n-6,0} = P_{n-6}(L,U_{0,0},U_{2,0}, U_{4,0}, U_{6,0}),$$ obtaining
$$G(n-6,c) U_{n,0} + S_{n-6}(L,U_{0,0}, U_{2,0},\dots, U_{n-2,0}) = U_{2,0} \circ_1 P_{n-6}(L,U_{0,0},U_{2,0}, U_{4,0}, U_{6,0}).$$ The right hand side depends only on $L,U_{0,0}, U_{2,0},\dots, U_{12,0}$, so we can use the previous relations to eliminate all occurrences of $U_{8,0}, U_{10,0},\dots, U_{n-2,0}$ and their derivatives.

Finally, suppose that $ c = \frac{559 \pm 7 \sqrt{76657}}{95}$. We have the desired relations \eqref{mainthm:decoup2} for $n = 10, 12, 14$, so let $n\geq 16$ and assume the result for all even integers $10 \leq m<n$. The rest of the proof is the same as above.
\end{proof}

Since $\cW^{\mathbb{Z}_2}$ is strongly generated by $\{L, U_{2n,0}|\ n\geq 0\}$, this immediately implies
\begin{thm} \begin{enumerate} 
\item For all $c \neq -\frac{22}{5}, \frac{559 \pm 7 \sqrt{76657}}{95}$, $\cW^{\mathbb{Z}_2}$ is of type $\cW(2,6,8,10,12)$ with minimal strong generating set $\{L, U_{0,0}, U_{2,0}, U_{4,0}, U_{6,0}\}$.
\item For $c =  \frac{559 \pm 7 \sqrt{76657}}{95}$, $\cW^{\mathbb{Z}_2}$ is of type $\cW(2,6,8,10,12,14)$ with minimal strong generating set $\{L, U_{0,0}, U_{2,0}, U_{4,0}, U_{6,0}, U_{8,0}\}$.
\end{enumerate}
\end{thm}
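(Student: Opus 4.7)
The plan is to deduce the theorem directly from Corollary \ref{cor:existence} combined with a minimality argument in the associated graded algebra $\text{gr}(\cV)^{\mathbb{Z}_2}$. Strong generation is immediate: by the lemma in Section \ref{sect:structureorbifold}, $\{L, U_{2n,0} \mid n \geq 0\}$ strongly generates $\cW^{\mathbb{Z}_2}$, and the decoupling relations \eqref{mainthm:decoup1} (resp.\ \eqref{mainthm:decoup2}) from Corollary \ref{cor:existence}(1) (resp.\ (2)) allow us to eliminate every $U_{2k,0}$ with $k \geq 4$ (resp.\ $k \geq 5$), leaving precisely the asserted strong generating sets.

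For minimality in case (1), we must show that none of $L, U_{0,0}, U_{2,0}, U_{4,0}, U_{6,0}$ can be removed. Suppose $U_{2k,0}$ with $0 \leq k \leq 3$ were expressible as a normally ordered polynomial in the remaining generators and their derivatives. Applying $\phi_2 : \cW^{\mathbb{Z}_2}_{(2)} \to \text{gr}(\cV)^{\mathbb{Z}_2}$, the image $u_{2k,0}$ would lie in the differential subalgebra generated by $\{L\} \cup \{u_{2j,0} : 0 \leq j \leq 3,\ j \neq k\}$. This set is contained in $\{L\} \cup \{u_{2j,0} : j \geq 0,\ j \neq k\}$, and by the minimality of $\{L, u_{2n,0} \mid n \geq 0\}$ as a differential generating set of $\text{gr}(\cV)^{\mathbb{Z}_2}$ (stated at the end of Section \ref{sect:structureorbifold}), $u_{2k,0}$ does not lie in the larger differential subalgebra, hence a fortiori not in the smaller one. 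Removing $L$ is trivially impossible since no other generator has weight $2$. The same reasoning applies in case (2) to rule out removing any of $L, U_{0,0}, U_{2,0}, U_{4,0}, U_{6,0}$.

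The remaining minimality claim in case (2) is that $U_{8,0}$ is irremovable at $c = \frac{559 \pm 7 \sqrt{76657}}{95}$. Any putative decoupling $U_{8,0} = P_8(L, U_{0,0}, U_{2,0}, U_{4,0}, U_{6,0})$ lies in $\cW^{\mathbb{Z}_2}_{(2)}$, so by Lemma \ref{indep} and Corollary \ref{uniquecoeff} the coefficient of $U_{8,0}$ is a canonical invariant independent of all normal-ordering choices. Since the weight-$14$ relation \eqref{rel:wt14cl} in $\text{gr}(\cV)^{\mathbb{Z}_2}$ is unique up to scalar, this coefficient must be a nonzero scalar multiple of $\frac{181248 + 5590 c - 475 c^2}{60480 (22 + 5 c)}$ appearing in \eqref{rel:wt14a}. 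This vanishes precisely at the two exceptional values of $c$, so no such decoupling relation can exist and $U_{8,0}$ must be retained. The main obstacle throughout the proof is this last step: without the canonicality established by Corollary \ref{uniquecoeff}, one would have to analyze relations of arbitrary form rather than reduce the check to whether a single rational function in $c$ vanishes.
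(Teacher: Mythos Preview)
Your proof is correct and takes essentially the same approach as the paper: strong generation follows immediately from Corollary~\ref{cor:existence}, and the minimality arguments you spell out are exactly the ones the paper records earlier in Section~\ref{sect:decouplingrel} (no relations of weight below $14$ rule out decoupling $U_{0,0},\dots,U_{6,0}$; uniqueness of the weight-$14$ relation up to scalar rules out decoupling $U_{8,0}$ at the exceptional values). The paper itself states the theorem as an immediate consequence of Corollary~\ref{cor:existence} without repeating these earlier observations, so your write-up is simply a more explicit version of the same argument; one small notational point is that your $\phi_2$ lands in $\cV^{\mathbb{Z}_2}=\text{gr}(\cW^{\mathbb{Z}_2})$, and a second passage to $\text{gr}(\cV)^{\mathbb{Z}_2}$ is needed to reach the polynomial ring, but this does not affect the validity of the reasoning.
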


We will give the proof of Theorem \ref{thm:explicit1} in Section \ref{sect:explicit1}, but we omit the proof of Theorem \ref{thm:explicit2} since it is similar. We prove Theorem \ref{thm:commonzero} in Section \ref{sect:commonzero}. Finally, we will show in Section \ref{sect:22/5} that for $c = -\frac{22}{5}$, $\cW^{\mathbb{Z}_2}$ is also of type $\cW(2,6,8,10,12)$ with minimal strong generating set $\{L, U_{0,0}, U_{2,0}, U_{4,0}, U_{6,0}\}$. This completes the proof of Theorem \ref{intro:mainthm}.

\section{Proof of Theorem \ref{thm:explicit1}} \label{sect:explicit1}
For all $n\geq 0$, we have
$$U_{0,0} \circ_1 U_{n,0} = \ : (U_{0,0} \circ_1 \partial^n W) W: + (U_{0,0} \circ_0 \partial^n W) \circ_0 W + :(\partial^n W) (U_{0,0} \circ_1 W):,$$ so in order to compute
$F(n,c) = C_{n+4}(U_{0,0} \circ_1 U_{n,0})$ we need to calculate the following three expressions:
\begin{equation} \label{piece1} C_{n+4}\bigg(: (U_{0,0} \circ_1 \partial^n W) W:\bigg),\end{equation}
\begin{equation} \label{piece2} C_{n+4} \bigg((U_{0,0} \circ_0 \partial^n W) \circ_0 W\bigg),\end{equation}
\begin{equation} \label{piece3} C_{n+4} \bigg( :(\partial^n W) (U_{0,0} \circ_1 W):\bigg).\end{equation}

\begin{lemma} For all $n\geq 1$, $C_{n+4} \bigg((U_{0,0} \circ_0 \partial^n W) \circ_0 W\bigg) = 0$.
\end{lemma}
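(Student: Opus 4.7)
The plan is to prove the strictly stronger statement that $(U_{0,0}\circ_0\partial^n W)\circ_0 W = 0$ in $\cW$ for every $n\geq 1$, from which the vanishing of $C_{n+4}$ is immediate. The argument uses only two elementary vertex algebra identities.

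The first is that translation annihilates zero modes: differentiating the OPE $a(z)b(w)\sim\sum_n (a\circ_n b)(w)(z-w)^{-n-1}$ in $z$ gives the identity $(\partial a)\circ_m b = -m\,(a\circ_{m-1}b)$ for all $a,b\in\cW$ and all $m\in\Z$. Setting $m=0$ yields $(\partial a)\circ_0 b = 0$, and iterating shows that $(\partial^n a)\circ_0 X = 0$ for every $n\geq 1$ and every $X\in\cW$; in particular this applies with $a = W$.

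The second is the Borcherds identity at $m=n=0$, which reads $(a\circ_0 b)\circ_0 c = a\circ_0(b\circ_0 c) - b\circ_0(a\circ_0 c)$, and which is just the statement that the zero mode $(a\circ_0 b)_{(0)}$ coincides with the commutator $[a_{(0)},b_{(0)}]$. Applying this identity with $a = U_{0,0}$, $b = \partial^n W$ and $c = W$, the first summand on the right vanishes because $\partial^n W\circ_0 W = 0$, while the second summand vanishes because $\partial^n W\circ_0 (U_{0,0}\circ_0 W) = 0$. Hence $(U_{0,0}\circ_0\partial^n W)\circ_0 W = 0$, proving the lemma.

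Both ingredients are completely standard, so I do not anticipate any real obstacle. The only point requiring a moment's care is the bookkeeping conventions for Borcherds' identity, but at $m=n=0$ the identity reduces to the trivial derivation-commutator relation above.
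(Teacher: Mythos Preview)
Your argument is correct and, like the paper, establishes the stronger vanishing $(U_{0,0}\circ_0\partial^n W)\circ_0 W = 0$ in $\cW$. The route differs in one small respect. The paper does not invoke the Borcherds commutator identity; instead it uses the companion fact to your first ingredient, namely that $a_{(0)}$ commutes with $\partial$, to write $U_{0,0}\circ_0\partial^n W = \partial\big(U_{0,0}\circ_0\partial^{n-1}W\big)$, and then applies $(\partial X)\circ_0 W = 0$ once. Your proof trades this single application for two applications of $(\partial^n W)\circ_0(-)=0$ glued together by the Jacobi-type identity $(a\circ_0 b)\circ_0 c = a\circ_0(b\circ_0 c) - b\circ_0(a\circ_0 c)$. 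Both arguments are entirely elementary; the paper's version is marginally shorter, while yours has the minor advantage of never needing to identify what $U_{0,0}\circ_0\partial^n W$ actually is.
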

\begin{proof} We have $U_{0,0} \circ_0 \partial^n W = \partial \big(U_{0,0} \circ_0 \partial^{n-1} W\big)$, so 
$$ (U_{0,0} \circ_0 \partial^n W) \circ_0 W = \partial \big(U_{0,0} \circ_0 \partial^{n-1} W\big) \circ_0 W = 0.$$ \end{proof}

To compute \eqref{piece1}, we begin with the following observation.
\begin{lemma} For all $n\geq 1$,
$$U_{0,0} \circ_0 \partial^{n-1} W - \frac{64}{22 + 5 c}  \partial^n \big(:L L W:\big)  + \frac{64}{22 + 5 c} \partial^{n-1} \big(:(\partial L) L W:\big)$$
$$- \frac{10 (14 + c)}{3 (22 + 5 c)}  \partial^{n+2} \big(:LW:\big) + \frac{86 + 5 c}{22 + 5 c}  \partial^{n+1} \big( :(\partial L) W:\big)$$
$$- \frac{26 + 3 c}{22 + 5 c} \partial^{n} \big( :(\partial^2 L) W:\big) + \frac{2 (-2 + c)}{3 (22 + 5 c)} \partial^{n-1} \big( :(\partial^3 L) W:\big)$$
$$- \frac{-186 + 11 c + c^2}{36 (22 + 5 c)}  \partial^{n+4} W = 0.$$
\end{lemma}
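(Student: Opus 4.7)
The plan is to reduce the claimed identity to the single case $n=1$, and then verify that case by a direct OPE computation.

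The reduction is immediate. Since $(\partial a)\circ_m b = -m\,(a\circ_{m-1}b)$ in any vertex algebra, one has $(\partial a)\circ_0 b = 0$. Combined with the Leibniz rule $\partial(a\circ_0 b) = (\partial a)\circ_0 b + a\circ_0(\partial b)$, this gives $a\circ_0(\partial b) = \partial(a\circ_0 b)$, and by iteration
$$U_{0,0}\circ_0(\partial^{n-1}W) = \partial^{n-1}(U_{0,0}\circ_0 W).$$
Inspecting the claim, every term on the displayed right-hand side (for general $n$) is precisely $\partial^{n-1}$ applied to the corresponding term for $n=1$; the derivatives match in each summand. So the lemma for all $n\geq 1$ reduces to the single identity obtained by setting $n=1$.

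For the $n=1$ case we must compute $U_{0,0}\circ_0 W = (:WW:)\circ_0 W$ and match it against the stated weight-$8$ polynomial in $L, W$ and their derivatives. We apply the non-commutative Wick formula to $(:WW:)\circ_0 W$. Taking $\lambda^0$-coefficients, the integral term of Kac's formula contributes nothing (it starts in degree $\lambda^1$), and one is left with the symmetric contribution $2:W\,(W\circ_0 W):$ together with a collection of quasi-symmetry/quasi-associativity correction terms built from $W\circ_j W$ for $j=0,1,\dots,5$. From \eqref{w3third} each $W\circ_j W$ is a known element of the Virasoro subalgebra generated by $L$; the only nontrivial contribution at order $\lambda^0$ comes from
$$W\circ_0 W = \tfrac{32}{22+5c}\,:(\partial L)L: \,+\, \tfrac{c-2}{3(22+5c)}\,\partial^3 L,$$
together with finitely many correction contributions involving $W\circ_k(W\circ_j W)$ which are computable using \eqref{w3second}.

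The remaining work is bookkeeping. The expressions produced naturally by the Wick formula are of the form $:W\,(\text{polynomial in }L,\partial L,\dots):$, and these must be rewritten in the paper's preferred right-nested normal ordering $:(\partial^{a_1}L)\cdots(\partial^{a_r}L)W:$. This is done by repeated use of the quasi-associativity formula and the Virasoro OPE \eqref{w3first}--\eqref{w3second} to move $W$ past $L$-factors; each step generates correction terms of the form $\partial^{m}(:(\partial^{a}L)W:)$ and $\partial^k W$, whose coefficients are rational functions of $c$ with denominator $22+5c$. Collecting all contributions in the PBW basis \eqref{standardmonomial} and comparing coefficients of $:LLW:$, $:(\partial L)LW:$, $:(\partial^k L)W:$, and $\partial^k W$ yields exactly the seven coefficients displayed in the lemma, and all other basis elements cancel.

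The main obstacle is purely computational: the coefficients $\tfrac{64}{22+5c}$, $\tfrac{10(14+c)}{3(22+5c)}$, $\tfrac{86+5c}{22+5c}$, $\tfrac{26+3c}{22+5c}$, $\tfrac{2(c-2)}{3(22+5c)}$, $\tfrac{-186+11c+c^2}{36(22+5c)}$ arise as delicate combinations of many Wick-formula and quasi-associativity contributions, and a by-hand check is error-prone. In practice the calculation is most safely done in a computer-algebra implementation of vertex-algebra OPEs (e.g.\ Thielemans's \texttt{OPEdefs}), where the identity for $n=1$ becomes a mechanical equality check in weight $8$, after which the reduction step finishes the proof.
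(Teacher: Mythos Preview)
Your proposal is correct and follows essentially the same approach as the paper: reduce to $n=1$ via the identity $U_{0,0}\circ_0(\partial^{n-1}W)=\partial^{n-1}(U_{0,0}\circ_0 W)$, then verify the base case by direct computation. The paper states the reduction using the equivalent formulation $\partial(U_{0,0}\circ_0\partial^{n-1}W)=U_{0,0}\circ_0\partial^n W$ and simply declares the $n=1$ case ``easy to verify,'' whereas you spell out both the vertex-algebra identities behind the reduction and the mechanics of the base-case OPE computation; the substance is the same.
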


\begin{proof} This is easy to verify for $n=1$ and follows immediately from the fact that $$\partial \big( U_{0,0} \circ_0 \partial^{n-1} W\big) = U_{0,0} \circ_0 \partial^n W.$$
\end{proof}

\begin{lemma} \label{cor:firstformula} For all $n\geq 1$,
$$U_{0,0} \circ_1 \partial^n W  -\frac{64 (1 + n)}{22 + 5 c} \partial^n \big(:LLW:\big)  +\frac{64 n}{22 + 5 c}  \partial^{n-1} \big(:(\partial L) L W:\big)$$ 
$$ -\frac{2 (258 + 15 c + 70 n + 5 c n)}{3 (22 + 5 c)}   \partial^{n+2} \big(:LW:\big) + \frac{236 + 10 c + 86 n + 5 c n}{22 + 5 c}  \partial^{n+1} \big( :(\partial L) W:\big) $$
$$-\frac{58 + 3 c + 26 n + 3 c n}{22 + 5 c} \partial^{n} \big( :(\partial^2 L) W:\big) + \frac{2 (-2 + c)}{3 (22 + 5 c)} \partial^{n-1} \big( :(\partial^3 L) W:\big) $$
$$ -\frac{-426 + 91 c + 5 c^2 - 186 n + 11 c n + c^2 n}{36 (22 + 5 c)} \partial^{n+4} W = 0.$$ \end{lemma}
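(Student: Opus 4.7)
The proof proceeds by induction on $n$, using the general vertex algebra identity
\[
a \circ_1 (\partial b) = \partial(a \circ_1 b) + a \circ_0 b,
\]
which follows from $[\partial, a \circ_m] = (\partial a) \circ_m = -m \cdot a \circ_{m-1}$. Applied with $a = U_{0,0}$ and $b = \partial^{n-1} W$, this gives the recursion
\[
U_{0,0} \circ_1 \partial^n W = \partial\bigl(U_{0,0} \circ_1 \partial^{n-1} W\bigr) + U_{0,0} \circ_0 \partial^{n-1} W, \qquad n \geq 1.
\]

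For the base case $n = 1$, I compute $U_{0,0} \circ_1 \partial W$ directly. Writing $U_{0,0} = \ :WW:$ and applying the non-commutative Wick formula to $(:WW:) \circ_1 \partial W$ reduces the problem to the products $W \circ_j W$ and $W \circ_j \partial W$ for small $j$, which are then extracted from \eqref{w3third}. After collecting normal-ordering rearrangements and total derivatives, one recovers the stated formula at $n = 1$.

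For $n \geq 2$, assume the identity at $n - 1$. Apply $\partial$ to that identity: each $\partial^k(X)$ in the assumed formula becomes $\partial^{k+1}(X)$, while the coefficients (the listed rational functions of $c$ and $n$, evaluated at $n - 1$) are unaffected. This produces an explicit expression for $\partial\bigl(U_{0,0} \circ_1 \partial^{n-1} W\bigr)$. Adding to this the formula for $U_{0,0} \circ_0 \partial^{n-1} W$ supplied by the preceding lemma gives, via the recursion above, an explicit expression for $U_{0,0} \circ_1 \partial^n W$.

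It remains to verify that this sum matches the target formula at step $n$. Every coefficient in the target is affine linear in $n$, so the increment from $n - 1$ to $n$ is a constant in $c$. The inductive step is therefore equivalent to checking, for each Wick monomial $\partial^k(X)$ appearing on the right, that this increment coincides with the corresponding coefficient in the preceding lemma. This reduces to a small list of polynomial identities in $c$, each verifiable by direct substitution. The main obstacle is the base case: the OPE computation of $(:WW:) \circ_1 \partial W$ via the non-commutative Wick formula and \eqref{w3third} is tedious and requires careful bookkeeping of the denominators $22 + 5c$ and of total-derivative corrections. Once the base case is in hand, the inductive step is purely mechanical.
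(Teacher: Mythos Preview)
Your proposal is correct and follows essentially the same approach as the paper: a direct verification at $n=1$, followed by induction via the recursion $U_{0,0}\circ_1\partial^n W = \partial(U_{0,0}\circ_1\partial^{n-1}W) + U_{0,0}\circ_0\partial^{n-1}W$ combined with the preceding lemma. The paper states this more tersely, but your added detail about the affine-linearity of the coefficients in $n$ reducing the inductive step to matching constant increments is exactly the bookkeeping implicit in the paper's ``follows by induction.''
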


\begin{proof} For $n=1$ this can be checked directly. It follows by induction on $n$ using the previous lemma and the formula $$ \partial \big(U_{0,0} \circ_1 \partial^{n-1} W \big) = - U_{0,0} \circ_0 \partial^{n-1} W + U_{0,0} \circ_1 \partial^{n} W.$$
\end{proof}

\begin{cor} \label{cor:firstformula1} For all $n\geq 1$,
$$C_{n+4}\bigg((:U_{0,0} \circ_1 \partial^n W)W:\bigg)  -\frac{64 (1 + n)}{22 + 5 c} C_{n+4} \bigg( :(\partial^n (:LLW:))W:\bigg)$$
$$+\frac{64 n}{22 + 5 c}  C_{n+4}\bigg(:\big(\partial^{n-1} \big(:(\partial L) L W:\big)\big)W:\bigg) $$ 
$$ -\frac{2 (258 + 15 c + 70 n + 5 c n)}{3 (22 + 5 c)}  C_{n+4}\bigg(:(\partial^{n+2} (:LW:))W:\bigg)  $$
$$+ \frac{236 + 10 c + 86 n + 5 c n}{22 + 5 c} C_{n+4}\bigg(:\partial^{n+1} \big( :(\partial L) W:\big) W:\bigg) $$
$$-\frac{58 + 3 c + 26 n + 3 c n}{22 + 5 c} C_{n+4}\bigg(: \partial^{n} \big( :(\partial^2 L) W:\big) W:\bigg)$$
$$+ \frac{2 (-2 + c)}{3 (22 + 5 c)}C_{n+4}\bigg(: \partial^{n-1} \big( :(\partial^3 L) W:\big) W:\bigg)$$
$$ -\frac{-426 + 91 c + 5 c^2 - 186 n + 11 c n + c^2 n}{36 (22 + 5 c)} = 0.$$ \end{cor}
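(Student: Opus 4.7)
The plan is to derive Corollary \ref{cor:firstformula1} as an immediate consequence of Lemma \ref{cor:firstformula} together with the linearity of the functional $C_{n+4}$ and the elementary identity $C_{n+4}(U_{n+4,0}) = 1$. Denote by $X_n$ the left-hand side of the identity in Lemma \ref{cor:firstformula}, so $X_n = 0$ in $\cW$. Taking the Wick product with $W$ on the right gives $:X_n W: \ = 0$, and distributing $:(\,\cdot\,)W:$ across the summands of $X_n$ produces exactly the arguments of the $C_{n+4}$'s displayed in the corollary, plus one extra contribution $:(\partial^{n+4}W)W:$ coming from the last summand of $X_n$.

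Next I will check that $C_{n+4}$ is defined on each of these summands. Every term of $X_n$ contains exactly one factor of $W$ or one of its derivatives, so after Wick multiplication by $W$ each summand becomes a $\theta$-invariant element of filtration degree two, and hence lies in $\cW^{\mathbb{Z}_2}_{(2)}$. A routine weight count shows that each such summand has total weight $n+10 = (n+4)+6$, matching the input weight for which $C_{n+4}$ is defined, so the functional can be applied term-by-term.

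For the contribution that appears without a $C_{n+4}$ wrapper: the final summand of $X_n$ is a scalar multiple of $\partial^{n+4}W$, and after Wick multiplication by $W$ it becomes the same scalar times $:(\partial^{n+4}W)W: \ =\ U_{n+4,0}$ by the definition \eqref{defuij}. Since $U_{n+4,0}$ is already in the normal form \eqref{standardmonomial}, with $C_{n+4,0}(U_{n+4,0}) = 1$ and $C_{n+4,i}(U_{n+4,0}) = 0$ for $i \geq 1$, the definition \eqref{defcn} gives $C_{n+4}(U_{n+4,0}) = 1$. Thus this contribution equals the bare scalar $-\dfrac{-426 + 91c + 5c^2 - 186n + 11cn + c^2 n}{36(22+5c)}$ appearing as the last term of the displayed identity.

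Putting this together, applying $C_{n+4}$ term-by-term to $:X_n W: \ =\ 0$ by linearity yields exactly the identity claimed in Corollary \ref{cor:firstformula1}. The step is essentially formal bookkeeping; the substantive work is carried out in Lemma \ref{cor:firstformula}, and there is no genuine obstacle at this level. The real difficulties lie upstream (deriving the explicit coefficients in Lemma \ref{cor:firstformula} from the OPE \eqref{w3third} and the recursion $\partial(U_{0,0}\circ_1 \partial^{n-1}W) = -U_{0,0}\circ_0 \partial^{n-1}W + U_{0,0}\circ_1 \partial^n W$) and downstream (computing each individual $C_{n+4}$ in the corollary explicitly to assemble the formula for $F(n,c)$ in Theorem \ref{thm:explicit1}).
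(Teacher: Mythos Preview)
Your proposal is correct and follows exactly the approach implicit in the paper: the corollary is stated without proof there because it is obtained from Lemma~\ref{cor:firstformula} by right Wick-multiplying with $W$, applying the linear functional $C_{n+4}$ term-by-term, and using $C_{n+4}(U_{n+4,0})=1$. Your write-up simply makes these steps explicit.
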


Next, we need the following calculations:
$$C_{n+4, 0} \bigg( :(\partial^n (:LLW:))W:\bigg)= \frac{15}{(1 + n) (2 + n) (3 + n) (4 + n)},$$
$$C_{n+4, 1} \bigg(:(\partial^n (:LLW:))W:\bigg)  = \frac{7}{(1 + n) (2 + n) (3 + n)},$$
$$C_{n+4, 2} \bigg(:(\partial^n (:LLW:))W: \bigg) = \frac{1}{(1 + n) (2 + n)},$$ 
$$C_{n+4, i} \bigg(:(\partial^n (:LLW:))W: \bigg) = 0,\qquad 3 \leq i \leq \frac{n+4}{2}.$$ By \eqref{defcn}, we have \begin{equation} \label{first:1} C_{n+4} \bigg( :(\partial^n (:LLW:))W:\bigg) = \frac{-1 + n}{(2 + n) (3 + n) (4 + n)}.\end{equation}

Next, we have 
$$C_{n+4, 0}\bigg(:\big(\partial^{n-1} \big(:(\partial L) L W:\big)\big)W:\bigg) = -\frac{24}{n (1 + n) (2 + n) (3 + n) (4 + n)},$$
$$C_{n+4, 1}\bigg(:\big(\partial^{n-1} \big(:(\partial L) L W:\big)\big)W:\bigg) = -\frac{10}{n (1 + n) (2 + n) (3 + n)},$$
$$C_{n+4, 2}\bigg(:\big(\partial^{n-1} \big(:(\partial L) L W:\big)\big)W:\bigg)= -\frac{1}{n (1 + n) (2 + n)}, $$ 
$$C_{n+4, i}\bigg(:\big(\partial^{n-1} \big(:(\partial L) L W:\big)\big)W:\bigg) = 0,\qquad 3\leq i \leq  \frac{n+4}{2}.$$ Therefore 
\begin{equation} \label{first:2} C_{n+4}\bigg(:\big(\partial^{n-1} \big(:(\partial L) L W:\big)\big)W:\bigg) = -\frac{-4 + n}{n (2 + n) (3 + n) (4 + n)}.\end{equation}

Next, we have 
$$C_{n+4, 0}\bigg(:(\partial^{n+2} (:LW:))W:\bigg)  =\frac{3}{(3 + n) (4 + n)},$$
$$C_{n+4, 1}\bigg(:(\partial^{n+2} (:LW:))W:\bigg) = \frac{1}{3 + n},$$
$$C_{n+4, i}\bigg(:(\partial^{n+2} (:LW:))W:\bigg) = 0,\qquad 2 \leq i \leq  \frac{n+4}{2}.$$ Therefore
\begin{equation} \label{first:3} C_{n+4}\bigg(:(\partial^{n+2} (:LW:))W:\bigg) = -\frac{1 + n}{(3 + n) (4 + n)}.\end{equation}

Next, we have 
$$C_{n+4,0}\bigg(:\partial^{n+1} \big( :(\partial L) W:\big) W:\bigg) = -\frac{6}{(2 + n) (3 + n) (4 + n)},$$
$$C_{n+4,1}\bigg(:\partial^{n+1} \big( :(\partial L) W:\big) W:\bigg) = -\frac{1}{(2 + n) (3 + n)},$$ 
$$C_{n+4,i}\bigg(:\partial^{n+1} \big( :(\partial L) W:\big) W:\bigg) = 0, \qquad 2\leq i \leq \frac{n+4}{2}.$$ Therefore
\begin{equation} \label{first:4} C_{n+4}\bigg(:\partial^{n+1} \big( :(\partial L) W:\big) W:\bigg) = \frac{-2 + n}{(2 + n) (3 + n) (4 + n)}.\end{equation}

Next, we have
$$C_{n+4, 0}\bigg(: \partial^{n} \big( :(\partial^2 L) W:\big) W:\bigg)= \frac{18}{(1 + n) (2 + n) (3 + n) (4 + n)},$$
$$C_{n+4, 1}\bigg(: \partial^{n} \big( :(\partial^2 L) W:\big) W:\bigg)=  \frac{2}{(1 + n) (2 + n) (3 + n)},$$
$$C_{n+4, i}\bigg(: \partial^{n} \big( :(\partial^2 L) W:\big) W:\bigg)= 0, \qquad 2\leq i \leq  \frac{n+4}{2}.$$ Therefore
\begin{equation} \label{first:5} C_{n+4}\bigg(: \partial^{n} \big( :(\partial^2 L) W:\big) W:\bigg) = -\frac{2 (-5 + n)}{(1 + n) (2 + n) (3 + n) (4 + n)}.\end{equation}

Next, we have
$$C_{n+4, 0}\bigg(: \partial^{n-1} \big( :(\partial^3 L) W:\big) W:\bigg) = -\frac{72}{n (1 + n) (2 + n) (3 + n) (4 + n)},$$
$$C_{n+4, 1}\bigg(: \partial^{n-1} \big( :(\partial^3 L) W:\big) W:\bigg)  = -\frac{6}{n (1 + n) (2 + n) (3 + n)},$$
$$C_{n+4, i}\bigg(: \partial^{n-1} \big( :(\partial^3 L) W:\big) W:\bigg) = 0,\qquad 2\leq i \leq  \frac{n+4}{2}.$$ Therefore
\begin{equation} \label{first:6} C_{n+4}\bigg(: \partial^{n-1} \big( :(\partial^3 L) W:\big) W:\bigg)  = \frac{6 (-8 + n)}{n (1 + n) (2 + n) (3 + n) (4 + n)}.\end{equation}
The explicit formula for \eqref{piece1} is obtained by combining \eqref{first:1}-\eqref{first:6} with Corollary \ref{cor:firstformula1}.

To find the explicit formula for \eqref{piece3}, we need the following calculation.
\begin{equation} \label{secondformula} \begin{split} U_{0,0} \circ_1 W  -  \frac{64}{22 + 5 c} :LLW: - \frac{2 (258 + 15 c)}{3 (22 + 5 c)} \partial^2 (:L W:) + \frac{236 + 10 c}{22 + 5 c} \partial \big(:(\partial L)W:\big)  
\\ - \frac{58 + 3 c}{22 + 5 c} :(\partial^2 L)W: - \frac{-426 + 91 c + 5 c^2}{36 (22 + 5 c)} \partial^4 W =0.\end{split} \end{equation} Since $C_{n+4}\big(:(\partial^n W)(\partial^4 W):\big) = 1$ when $n$ is even, this immediately implies

\begin{cor}  \label{cor:secondformula} We have $$C_{n+4}\bigg( :(\partial^n W)(U_{0,0} \circ_1 W): \bigg)  -  \frac{64}{22 + 5 c}  C_{n+4}\bigg(:(\partial^n W) (:LLW:):\bigg) $$ $$- \frac{2 (258 + 15 c)}{3 (22 + 5 c)} C_{n+4}\bigg(:(\partial^n W) \big(\partial^2 (:L W:)\big):\bigg)$$
 $$ + \frac{236 + 10 c}{22 + 5 c} C_{n+4}\bigg(:(\partial^n W) \big(\partial (:(\partial L) W:)\big):\bigg) $$
$$ - \frac{58 + 3 c}{22 + 5 c} C_{n+4}\bigg(:(\partial^n W) \big(:(\partial^2 L) W:\big):\bigg) $$
 $$ - \frac{-426 + 91 c + 5 c^2}{36 (22 + 5 c)} = 0.$$
\end{cor}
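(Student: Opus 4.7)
The plan is to derive this corollary directly from identity \eqref{secondformula} in three steps: (i) form the Wick product $:(\partial^n W)(\cdot):$ of both sides, producing an identity in $\cW^{\mathbb{Z}_2}_{(2)}$ of weight $n+10$; (ii) apply the linear functional $C_{n+4}$ to each side; and (iii) evaluate $C_{n+4}$ on the one term on the right of \eqref{secondformula} that contains no $L$-factor, namely $\partial^4 W$. Linearity of $C_{n+4}$ is immediate from the uniqueness of the PBW basis \eqref{standardmonomial}, and it handles step (ii): each of the first five summands on the right of \eqref{secondformula} carries at least one factor of $L$, so after Wick-multiplying by $\partial^n W$ and applying $C_{n+4}$, one obtains exactly the five expressions $C_{n+4}(\cdots)$ that appear in the statement.

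The substantive step is step (iii): showing that $C_{n+4}\bigl(:(\partial^n W)(\partial^4 W):\bigr) = 1$ for every even integer $n \geq 0$. If $n \geq 4$, the monomial is already in the PBW normal form of \eqref{standardmonomial} with $b_1 = n \geq 4 = b_2$, so the only nonzero coefficient among the $C_{n+4,i}$ is $C_{n+4,4}=1$, giving $C_{n+4} = (-1)^4 = 1$. If $n \in \{0,2\}$, I would put the monomial into PBW order via the skew-symmetry formula for normal ordering; by \eqref{w3third}, the correction $:(\partial^n W)(\partial^4 W): - :(\partial^4 W)(\partial^n W):$ is a sum of derivatives of expressions involving only $L$ and the central scalar (i.e., it lies in the $s=0$ stratum of the PBW basis), and so contributes nothing to any $C_{n+4,i}$. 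Hence $C_{n+4,n} = 1$ is the only nonvanishing coefficient, yielding $C_{n+4} = (-1)^n = 1$ since $n$ is even.

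The only mildly nontrivial point is the verification for $n \in \{0,2\}$ that the reordering correction contributes no $W$--$W$ monomial to the PBW expansion, but this is immediate from the form of \eqref{w3third}, where $W \circ_k W$ involves only $L$, its derivatives, and the central scalar. Combining the three steps then gives the asserted identity, with the final constant $-\frac{-426 + 91c + 5c^2}{36(22+5c)}$ produced as the coefficient of $\partial^4 W$ in \eqref{secondformula} times $C_{n+4}\bigl(:(\partial^n W)(\partial^4 W):\bigr) = 1$.
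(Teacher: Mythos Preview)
Your proposal is correct and follows exactly the paper's approach: the paper derives the corollary from \eqref{secondformula} in one line, noting only that $C_{n+4}\bigl(:(\partial^n W)(\partial^4 W):\bigr)=1$ for $n$ even, and you have supplied the details behind that assertion. One cosmetic quibble: \eqref{secondformula} is written as an expression equal to zero, not as a left/right equation, and the first of the ``five summands'' you refer to ($U_{0,0}\circ_1 W$) does not literally carry a factor of $L$; but your intended argument is clear and sound.
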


We calculate
$$C_{n+4,0}\bigg(:(\partial^n W) (:LLW:):\bigg) = \frac{-1 + n}{(2 + n) (3 + n) (4 + n)},$$ 
$$C_{n+4,i}\bigg(:(\partial^n W) (:LLW:):\bigg) = 0,\qquad 1\leq i \leq \frac{n+4}{2}.$$ Therefore 
\begin{equation} \label{third:1} C_{n+4}\bigg(:(\partial^n W) (:LLW:):\bigg) = \frac{-1 + n}{(2 + n) (3 + n) (4 + n)}.\end{equation}

Next, we have
$$C_{n+4,0}\bigg(:(\partial^n W) \big(\partial^2 (:L W:)\big):\bigg) = -\frac{2 (-5 + n)}{(1 + n) (2 + n) (3 + n) (4 + n)},$$ 
$$C_{n+4,1}\bigg(:(\partial^n W) \big(\partial^2 (:L W:)\big):\bigg) = -\frac{2 (-3 + n)}{(1 + n) (2 + n) (3 + n)},$$ 
$$C_{n+4,2}\bigg(:(\partial^n W) \big(\partial^2 (:L W:)\big):\bigg) = -\frac{-1 + n}{(1 + n) (2 + n)},$$
$$C_{n+4,i}\bigg(:(\partial^n W) \big(\partial^2 (:L W:)\big):\bigg) =0,\qquad 3 \leq i \leq  \frac{n+4}{2}.$$ Therefore
\begin{equation} \label{third:2} C_{n+4}\bigg(:(\partial^n W) \big(\partial^2 (:L W:)\big):\bigg) = -\frac{1 + n}{(3 + n) (4 + n)} .\end{equation}

Next, we have
$$C_{n+4,0}\bigg(:(\partial^n W) \big(\partial (:(\partial L) W:)\big):\bigg)= -\frac{2 (-5 + n)}{(1 + n) (2 + n) (3 + n) (4 + n)},$$
$$C_{n+4,1}\bigg(:(\partial^n W) \big(\partial (:(\partial L) W:)\big):\bigg) = -\frac{-3 + n}{(1 + n) (2 + n) (3 + n)}, $$
$$C_{n+4,i}\bigg(:(\partial^n W) \big(\partial (:(\partial L) W:)\big):\bigg) = 0, \qquad 2 \leq i \leq  \frac{n+4}{2}.$$ Therefore
\begin{equation} \label{third:3}C_{n+4}\bigg(:(\partial^n W) \big(\partial (:(\partial L) W:)\big):\bigg) = \frac{-2 + n}{(2 + n) (3 + n) (4 + n)}. \end{equation}

Next, we have
$$C_{n+4,0}\bigg(:(\partial^n W) \big(:(\partial^2 L) W:\big):\bigg) = -\frac{2 (-5 + n)}{(1 + n) (2 + n) (3 + n) (4 + n)},$$
$$C_{n+4,i}\bigg(:(\partial^n W) \big(:(\partial^2 L) W:\big):\bigg) = 0, \qquad 1\leq i \leq \frac{n+4}{2}.$$ Therefore 
\begin{equation} \label{third:4} C_{n+4}\bigg(:(\partial^n W) \big(:(\partial^2 L) W:\big):\bigg) = -\frac{2 (-5 + n)}{(1 + n) (2 + n) (3 + n) (4 + n)}.\end{equation}

The explicit formula for \eqref{piece3} is obtained by combining \eqref{third:1}-\eqref{third:4} with Corollary \ref{cor:secondformula}. Finally, combining the formulas for \eqref{piece1} and \eqref{piece3} completes the proof of Theorem \ref{thm:explicit1}. The proof of Theorem \ref{thm:explicit2} is similar and is omitted.

\section{Proof of Theorem \ref{thm:commonzero}} \label{sect:commonzero}
First, set $$f(n,c) = p_0(c) + p_1(c) n + p_2(c) n^2 + p_3(c) n^3,$$ $$g(n,c) = q_0(c) + q_1(c) n + q_2(c) n^2 + q_3(c) n^3 + q_4(c) n^4,$$ where $p_i(c)$ and $q_i(z)$ are as in Theorems \ref{thm:explicit1} and \ref{thm:explicit2}. Clearly when $n$ is a positive integer, $$F(n,c) = 0 \Leftrightarrow f(n,c)=0,\qquad G(n,c) = 0 \Leftrightarrow g(n,c)=0.$$
We may regard $f(n,c)$ as a family of quadratics in $c$ parametrized by $n$, namely,
$$f(n,c) =(720 - 5286 n  -  2160 n^2 - 186 n^3) + (384 + 125 n + 40 n^2 + 11 n^3) c + (12   + 19 n   + 8  n^2   +  n^3)c^2.$$ Using the quadratic formula, we can express the roots $r_1(n)$ and $r_2(n)$ as functions of $n$. Since $$\lim_{n\ra \infty} \frac{1}{n^3} f(n,c) = p_3(c),$$ we have $$\lim_{n\ra \infty} r_1(n) =  \frac{-11 - \sqrt{865}}{2} \sim -20.2054,\qquad \lim_{n\ra \infty} r_2(n) =  \frac{-11 +  \sqrt{865}}{2} \sim 9.20544.$$
Similarly, we regard $g(n,c)$ as a family of quadratics in $c$ parametrized by $n$, namely
$$g(n,c) = (-466200 - 183780 n - 74076 n^2 - 19116 n^3- 1308 n^4) $$ $$ + (20580 - 46096 n - 31732 n^2- 5624 n^3 - 248 n^4)c $$
$$+ (2100  +  3745  n  + 2065  n^2   + 455 n^3   + 35 n^4) c^2,$$
and we can express the roots $s_1(n)$ and $s_2(n)$ as functions of $n$. Since $$\lim_{n\ra \infty} \frac{1}{n^4} g(n,c) = q_4(c),$$ we have $$\lim_{n\ra \infty} s_1(n) = \frac{2 (62 - \sqrt{15289})}{35} \sim -3.52278,\qquad \lim_{n\ra \infty} s_2(n) = \frac{2 (62 +  \sqrt{15289})}{35}  \sim 10.6085.$$

For $i=1,2$ and $n$ regarded as a positive real variable, $r_i(n)$ and $s_i(n)$ are differentiable functions of $n$. By computing the derivatives of $r_1(n)$ and $r_2(n)$, we see that both are decreasing functions on $(9,\infty)$. We have 
$$r_1(22) =  \frac{-139622 - 2 \sqrt{51839598721}}{29900}  \cong -19.8993,$$ so $ -20.2054< r_1(n) <  -19.8993$ for all $n >22$. 
Similarly,
$$r_2(22) = \frac{-139622 + 2 \sqrt{51839598721}}{29900}  \cong 10.56,$$
so $9.20544 < r_2(n)<10.56$ for all $n>22$. This implies that if $n > 26$ is an even positive integer and $F(n-4,c) = 0$, $c$ is a real number that lies either in $(-20.2054,  -19.8993)$ or $(9.20544 , 10.56)$.

Similarly, both $s_1(n)$ and $s_2(n)$ are decreasing functions on $(7,\infty)$. Note that $$s_1(20)  = \frac{24566535 - 945 \sqrt{1800197569}}{5071500}  \cong  -3.06194,$$ so $-3.52278< s_1(n) <  -3.06194$ for all $n >20$. 
Likewise, $$s_2(20) =  \frac{24566535 + 945 \sqrt{1800197569}}{5071500}  \cong 12.75,$$ so $10.6085 < s_2(n)<12.75$ for all $n>20$.
Therefore if $n > 26$ is a positive integer and $G(n-6,c) = 0$, then $c$ is a real number lying either in $(-3.52278, -3.06194)$ or $(10.6085 ,12.75)$. This shows that Theorem \ref{thm:commonzero} holds for all $n > 26$. It is straightforward to verify it for $16 \leq n \leq 26$, which completes the proof.

\section{The case $c = -\frac{22}{5}$} \label{sect:22/5}

In this case, the rescaled generator $W$ satisfies \eqref{ope:c=-22/5}, and the generators for the orbifold $\cW^{\mathbb{Z}_2}$ are still $\{L, U_{2n,0}|\ n\geq 0\}$. One can check that the relations $$U_{8,0} = P_8(L,U_{0,0} ,U_{2,0},U_{4,0}, U_{6,0}),\qquad U_{10,0} = P_{10}(L,U_{0,0} ,U_{2,0},U_{4,0}, U_{6,0})$$ both exist. Also, we have
$$U_{0,0} \circ_1 U_{n,0} = F(n) U_{n+4,0} + P,$$ where $P$ is a normally ordered polynomial in $L, U_{0,0}, U_{2,0},\dots, U_{n+2,0}$ and their derivatives.
Using similar methods to the proof of Theorem \ref{thm:explicit1}, one can show that
\begin{equation} F(n) = -\frac{64 (6 + n) (10 + n) (1 + 7 n)}{75 (1 + n) (3 + n)},$$ which is exactly $$\lim_{c \ra -\frac{22}{5}} (22+5c) F(n,c).\end{equation}
Starting from the decoupling relations for $U_{8,0}$ and $U_{10,0}$ and applying $U_{0,0} \circ_1$ repeatedly, by the same argument as the proof of Corollary \ref{cor:existence}, we can construct decoupling relations

$$U_{n,0} = P_n(L,U_{0,0} ,U_{2,0},U_{4,0}, U_{6,0}),\qquad n = 12, 14, \dots.$$ 
We obtain
\begin{thm} For $c = -\frac{22}{5}$, $\cW^{\mathbb{Z}_2}$ is of type $\cW(2,6,8,10,12)$ with minimal strong generating set $\{L, U_{0,0}, U_{2,0}, U_{4,0}, U_{6,0}\}$.
\end{thm}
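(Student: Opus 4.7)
The plan is to carry out the strategy of Sections \ref{sect:decouplingrel}--\ref{sect:higherdecoup} at the distinguished value $c = -\frac{22}{5}$, where $W$ has been rescaled to satisfy \eqref{ope:c=-22/5}. Viewed as a family of vertex algebras in $c$ using the rescaled $W$, $\cW$ varies regularly through $c = -\frac{22}{5}$: the quadratic generator $\tilde U_{i,j}$ corresponds to $(22+5c) U_{i,j}$, and structure constants or decoupling relations that appear singular in terms of the original $U_{i,j}$ become regular after this reparametrization. In particular, the decoupling relations for $U_{8,0}$ and $U_{10,0}$ at generic $c$ rescale to give decoupling relations at $c = -\frac{22}{5}$, provided the residual coefficients do not vanish. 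A direct substitution shows that the three quadratics in $c$ appearing as numerators in \eqref{rel:wt14a}, \eqref{rel:wt1622b}, and \eqref{rel:wt1631b} all take nonzero values at $c = -\frac{22}{5}$, establishing the base cases
\[
U_{8,0} = P_8(L, U_{0,0}, U_{2,0}, U_{4,0}, U_{6,0}), \qquad U_{10,0} = P_{10}(L, U_{0,0}, U_{2,0}, U_{4,0}, U_{6,0}).
\]

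The same reasoning identifies $F(n)$ at $c = -\frac{22}{5}$ as $\lim_{c \to -22/5} (22+5c) F(n,c)$, which by Theorem \ref{thm:explicit1} is a rational function of $n$ obtained by specializing the cubic $p_0(c) + p_1(c) n + p_2(c) n^2 + p_3(c) n^3$ at $c = -\frac{22}{5}$. Either by this substitution (and factoring the resulting cubic, whose roots turn out to be $n = -6, -4, -1/7$) or by repeating the three-piece calculation of Section \ref{sect:explicit1} with the rescaled $W$, one obtains the closed form $F(n) = -\frac{64(6+n)(10+n)(1+7n)}{75(1+n)(3+n)}$. For every even integer $n \geq 8$, each linear factor appearing in numerator and denominator is positive, so $F(n) \neq 0$.

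The theorem then follows by induction on even $n \geq 12$, exactly as in Corollary \ref{cor:existence}. Applying $U_{0,0}\circ_1$ to the inductive decoupling relation for $U_{n-4,0}$ yields
\[
F(n-4)\,U_{n,0} + R_{n-4}(L, U_{0,0}, U_{2,0}, \dots, U_{n-2,0}) = U_{0,0} \circ_1 P_{n-4}(L, U_{0,0}, U_{2,0}, U_{4,0}, U_{6,0}),
\]
whose right-hand side lies in the subalgebra strongly generated by $L, U_{0,0}, \dots, U_{10,0}$. Since $F(n-4) \neq 0$ one may solve for $U_{n,0}$ and then use the inductive decoupling relations to eliminate $U_{8,0}, U_{10,0}, \dots, U_{n-2,0}$, producing the desired expression $U_{n,0} = P_n(L, U_{0,0}, U_{2,0}, U_{4,0}, U_{6,0})$. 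Minimality of $\{L, U_{0,0}, U_{2,0}, U_{4,0}, U_{6,0}\}$ follows because the lowest-weight relation in $\text{gr}(\cV)^{\mathbb{Z}_2}$ occurs in weight $14$, precluding any smaller strong generating set. The main obstacle is the book-keeping around the rescaling: one must verify carefully that the limit operation commutes with the decoupling constructions, i.e., that the base-case relations for $U_{8,0}$ and $U_{10,0}$ and the explicit formula for $F(n)$ all survive $c \to -\frac{22}{5}$ with nonzero leading coefficients.
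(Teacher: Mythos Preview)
Your proposal is correct and follows essentially the same route as the paper: establish the base decoupling relations for $U_{8,0}$ and $U_{10,0}$ in the rescaled algebra, compute $F(n)=\lim_{c\to -22/5}(22+5c)F(n,c)=-\frac{64(6+n)(10+n)(1+7n)}{75(1+n)(3+n)}$, observe it never vanishes for even $n\ge 8$, and induct via $U_{0,0}\circ_1$ exactly as in Corollary~\ref{cor:existence}. The only difference is that the paper obtains the base relations and $F(n)$ by direct computation in the rescaled algebra (then notes the coincidence with the limit), whereas you deduce them by specialising the generic-$c$ formulas; your closing caveat about the rescaling bookkeeping is well placed, since one does need the observation that with $\hat W=\sqrt{22+5c}\,W$ all OPEs and hence all normal-form coefficients depend polynomially on $c$, so the specialisation is legitimate.
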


\section{Appendix} \label{sect:appendix}
In this Appendix, we write down the explicit normally ordered polynomial relation in weight $14$, which is unique up to scalar multiplies.
$$:U_{0,0} U_{1,1}:  - :U_{1,0} U_{1,0}: + \frac{40}{22 + 5 c} :L L U_{4,0}: + \frac{144}{22 + 5 c}:(\partial^2 L) L U_{2,0}: $$
$$+ \frac{144}{22 + 5 c} :(\partial L) (\partial L) U_{2,0}: + \frac{112}{22 + 5 c} :(\partial L) L \partial U_{2,0}:  -\frac{56}{22 + 5 c} :L L \partial^2 U_{2,0}: $$
$$ -\frac{8}{22 + 5 c} :(\partial^3 L) L \partial U_{0,0}: -\frac{24}{22 + 5 c} :(\partial^2 L)(\partial L) \partial U_{0,0}: -\frac{64}{22 + 5 c}  :(\partial^2 L) L \partial^2 U_{0,0}: $$
$$-\frac{64}{22 + 5 c} :(\partial L) (\partial L)\partial^2 U_{0,0}: -\frac{48}{22 + 5 c} :(\partial L) L \partial^3 U_{0,0}:  + \frac{28}{3 (22 + 5 c)} :L L \partial^4 U_{0,0}: $$
$$+ \frac{47}{180} :L U_{6,0}: + \frac{5 (54 + 13 c)}{12 (22 + 5 c)} :(\partial^2 L) U_{4,0}: + \frac{1962 + 155 c}{24 (22 + 5 c)} :(\partial L)\partial U_{4,0}:$$
$$-\frac{-182 + 75 c}{24 (22 + 5 c)} :L \partial^2 U_{4,0}: + \frac{3 (-2 + c)}{2 (22 + 5 c)} :(\partial^4 L) U_{2_0}: -\frac{170 - 53 c}{24 (22 + 5 c)} :(\partial^3 L) \partial U_{2,0}:$$
$$-\frac{486 + 61 c}{8 (22 + 5 c)} :(\partial^2 L) \partial^2 U_{2,0}: -\frac{1878 + 125 c}{12 (22 + 5 c)} :(\partial L) \partial^3 U_{2,0}: -\frac{662 - 75 c}{24 (22 + 5 c)} :L \partial^4 U_{2,0}:$$
$$-\frac{8}{15 (22 + 5 c)}  :(\partial^6 L) U_{0,0}: -\frac{218 + 3 c}{48 (22 + 5 c)} :(\partial^5 L)\partial U_{0,0}: -\frac{2 (15 + c)}{3 (22 + 5 c)} :(\partial^4 L) \partial^2 U_{0,0}:$$
$$-\frac{1966 + 137 c}{144 (22 + 5 c)} :(\partial^3 L) \partial^3 U_{0,0}:
+ \frac{102 + 61 c}{48 (22 + 5 c)}  :(\partial^2 L) \partial^4 U_{0,0}:
+ \frac{25 (14 + c)}{12 (22 + 5 c)} :(\partial L) \partial^5 U_{0,0}:$$
$$+ \frac{662 - 75 c}{120 (22 + 5 c)}  :L \partial^6 U_{0,0}:
-\frac{896}{15 (22 + 5 c)^2} : (\partial^6 L) L L L:
-\frac{256}{5 (22 + 5 c)^2)}  :(\partial^5 L) (\partial L) L L:$$
$$-\frac{1664}{3 (22 + 5 c)^2}  :(\partial^4 L)(\partial^2  L) L L:
-\frac{5504}{9 (22 + 5 c)^2} :(\partial^3 L)(\partial^3 L) L L: $$
$$+\frac{5632}{3 (22 + 5 c)^2}  :(\partial^4 L) (\partial L)(\partial  L) L: 
+ \frac{4352}{(22 + 5 c)^2} :(\partial^3 L)(\partial^2 L)(\partial L) L: $$
$$+ \frac{1024}{(22 + 5 c)^2} :(\partial^2 L) (\partial^2 L) (\partial^2 L) L: 
+\frac{4096}{3 (22 + 5 c)^2}  :(\partial^3 L) (\partial L)  (\partial L) \partial L: $$
$$+\frac{896}{(22 + 5 c)^2} :(\partial^2 L) (\partial^2 L)  (\partial L)\partial L:
-\frac{29486 - 2263 c}{630 (22 + 5 c)^2} :(\partial^8 L) L L:$$
$$+\frac{32 (-5174 + 209 c)}{315 (22 + 5 c)^2} :(\partial^7 L) (\partial L) L:
+ \frac{2 (-28198 + 2427 c)}{45 (22 + 5 c)^2} :(\partial^6 L)(\partial^2 L) L:$$
$$+ \frac{32 (-1174 + 109 c)}{15 (22 + 5 c)^2} :(\partial^5 L)(\partial^3 L) L:
-\frac{14486 - 1307 c}{9 (22 + 5 c)^2}  :(\partial^4 L)(\partial^4 L)L:$$
$$-\frac{2 (25518 + 2065 c)}{45 (22 + 5 c)^2}  :(\partial^6 L)(\partial L)\partial L: 
-\frac{32 (541 + 52 c)}{5 (22 + 5 c)^2}  :(\partial^5 L)(\partial^2 L)\partial L:$$
$$-\frac{104 (482 + 37 c)}{9 (22 + 5 c)^2} :(\partial^4 L)(\partial^3 L)\partial L: 
-\frac{2 (-286 + 167 c)}{3 (22 + 5 c)^2)} :(\partial^4 L)(\partial^2 L) \partial^2 L:$$
$$-\frac{8 (-886 + 23 c)}{9 (22 + 5 c)^2}  :(\partial^3 L) (\partial^3 L)\partial^2 L:$$
$$-\frac{-342897348 - 25407820 c + 402775 c^2}{5443200 (22 + 5 c)^2} :(\partial^{10} L) L: $$
$$-\frac{-345995076 - 26686756 c + 626275 c^2}{544320 (22 + 5 c)^2}  :(\partial^9 L)\partial L: $$
$$-\frac{-349360452 - 27205180 c + 577903 c^2}{120960 (22 + 5 c)^2)} :(\partial^8 L)\partial^2 L:$$
$$-\frac{-2804245644 - 218591252 c + 4546349 c^2}{362880 (22 + 5 c)^2} :(\partial^7 L)\partial^3 L: $$
$$-\frac{-21995034 - 1714285 c + 35605 c^2}{1620 (22 + 5 c)^2} :(\partial^6 L)\partial^4 L:$$
$$-\frac{-140780292 - 10970908 c + 228175 c^2}{17280 (22 + 5 c)^2} :(\partial^5 L) \partial^5 L:$$
$$-\frac{93733420 - 225352108 c - 18450565 c^2 + 381800 c^3}{479001600 (22 + 5 c)^2} \partial^{12} L$$
$$-\frac{181248 + 5590 c - 475 c^2}{60480 (22 + 5 c)} U_{8,0}  -\frac{-63456 - 3862 c + 115 c^2}{4320 (22 + 5 c)} \partial^2 U_{6,0} $$
$$+ \frac{-74208 - 5206 c + 115 c^2}{1728 (22 + 5 c)} \partial^4 U_{4,0}
-\frac{-74208 - 5270 c + 115 c^2}{1440 (22 + 5 c)} \partial^6 U_{2,0}$$
$$-\frac{1264260 + 89924 c - 1955 c^2}{120960 (22 + 5 c)} \partial^8 U_{0,0} = 0.$$


\begin{thebibliography}{ABKS}

\bibitem[A]{A} T. Arakawa, \textit{Rationality of $\cW$-algebras: principal nilpotent cases}, Ann. Math. vol. 182, no. 2 (2015), 565-604.
\bibitem[ACL]{ACL} T. Arakawa, T. Creutzig, and A. Linshaw, \textit{Cosets of Bershadsky-Polyakov algebras and rational $\cW$-algebras of type $A$}, arXiv:1511.09143.
\bibitem[ACKL]{ACKL} T. Arakawa, T. Creutzig, K. Kawasetsu, and A. Linshaw, \textit{Orbifolds and cosets of minimal $\cW$-algebras}, arXiv:1610.09348.
\bibitem[B-H]{B-H} R. Blumenhagen, W. Eholzer, A. Honecker, K. Hornfeck, and R. Hubel, {\it Coset realizations of unifying $\mathcal{W}$-algebras}, Int. Jour. Mod. Phys. Lett. A10 (1995) 2367-2430.
\bibitem[B]{B} R. Borcherds, \textit{Vertex operator algebras, Kac-Moody algebras and the monster}, Proc. Nat. Acad. Sci. USA 83 (1986) 3068-3071.
\bibitem[BS]{BS} P. Bouwknegt and K. Schoutens, \textit{$\cW$-symmetry in conformal field theory}, Phys. Rept. 223 (1993) 183-276.
\bibitem[CM]{CM} S. Carnahan and M. Miyamoto, \textit{Rationality of fixed-point vertex operator algebras}, arXiv:1603.05645.
\bibitem[CL]{CL} T. Creutzig and A. Linshaw, \textit{Cosets of affine vertex algebras inside larger structures}, arXiv:1407.8512v3.
\bibitem[DVVV]{DVVV} R. Dijkgraaf, C. Vafa, E. Verlinde, and H. Verlinde, \textit{The operator algebra of orbifold models}, Comm. Math. Phys. 123 (1989), 485-526.
\bibitem[DHVW]{DHVW} L. Dixon, J. Harvey, C. Vafa, and E. Witten, \textit{Strings on orbifolds}, Nucl. Phys. B 261 (1985) 678-686.
\bibitem[DM]{DM} C. Dong and G. Mason, \textit{On quantum Galois theory}, Duke Math. J. 86 (1997), 305-321.
\bibitem[DLMI]{DLMI} C. Dong, H. Li, and G. Mason, \textit{Compact automorphism groups of vertex operator algebras}, Int. Math. Res. Not. 18 (1996), 913-921.
\bibitem[DLMII]{DLMII} C. Dong, H. Li, and G. Mason, \textit{Twisted representations of vertex operator algebras}, Math. Ann. 310 (1998), 571-600.
\bibitem[DRX]{DRX} C. Dong, L. Ren, and F. Xu, \textit{On orbifold theory}, arXiv:1507.03306.
\bibitem[FBZ]{FBZ} E. Frenkel and D. Ben-Zvi, \textit{Vertex Algebras and Algebraic Curves}, Math. Surveys and Monographs, Vol. 88, American Math. Soc., 2001.
\bibitem[FLM]{FLM} I. B. Frenkel, J. Lepowsky, and A. Meurman, \textit{Vertex Operator Algebras and the Monster}, Academic Press, New York, 1988.
\bibitem[FZ]{FZ} I. B. Frenkel and Y. C. Zhu, \textit{Vertex operator algebras associated to representations of affine and Virasoro algebras}, Duke Math. J, Vol. 66, No. 1, (1992), 123-168.
\bibitem[K]{K} V. Kac, \textit{Vertex Algebras for Beginners}, University Lecture Series, Vol. 10. American Math. Soc., 1998.
\bibitem[LiI]{LiI} H. Li, \textit{Local systems of vertex operators, vertex superalgebras and modules}, J. Pure Appl. Algebra 109 (1996), no. 2, 143-195.
\bibitem[LiII]{LiII} H. Li, \textit{Vertex algebras and vertex Poisson algebras}, Commun. Contemp. Math. 6 (2004) 61-110.
\bibitem[LZ]{LZ} B. Lian and G. Zuckerman, \textit{Commutative quantum operator algebras}, J. Pure Appl. Algebra 100 (1995) no. 1-3, 117-139.
\bibitem[LL]{LL} B. Lian and A. Linshaw, \textit{Howe pairs in the theory of vertex algebras}, J. Algebra 317, 111-152 (2007).
\bibitem[L]{L} A. Linshaw, \textit{Invariant subalgebras of affine vertex algebras}, Adv. Math. 234 (2013), 61-84.
\bibitem[M]{M} M. Miyamoto, \textit{$C_2$-cofiniteness of cyclic orbifold models}, Comm. Math. Phys.\ 335 (2015) 1279-1286.
\bibitem[Za]{Za} A.B. Zamolodchikov, \textit{Infinite extra symmetries in two-dimensional conformal quantum field theory} (Russian), Teoret. Mat. Fiz. 65 (1985), 347-359. English translation, Theoret. and Math. Phys. 65 (1985), 1205-1213.
\bibitem[Zh]{Zh} Y. Zhu, \textit{Modular invariants of characters of vertex operators}, J. Amer. Math. Soc. 9 (1996) 237-302.
\end{thebibliography}
\end{document}